\newtheorem{theorem}{Theorem}[section]
\newtheorem{remark}[theorem]{Remark}
\newtheorem{remarks}[theorem]{Remarks}
\newtheorem{definitions}[theorem]{Definitions}
\numberwithin{equation}{section}
\newcommand{\Q}{{\mathbb{Q}}}
\newcommand{\Z}{{\mathbb{Z}}}
\newcommand{\F}{{\mathbb{F}}}
\newcommand{\ds}{\displaystyle}
\newcommand{\ft}{\footnotesize}
\newcommand{\ns}{\normalsize}
\newcommand{\order}{\raise0.8pt \hbox{${\scriptstyle \#}$}}
\newcommand{\lien}{\mathrel{\mkern-4mu}}
\newcommand{\too}{\relbar\lien\rightarrow}
\newcommand{\plus}{\ds\mathop{\raise 0.5pt \hbox{$\bigoplus$}}\limits}
\newcommand{\prd}{\ds\mathop{\raise 2.0pt \hbox{$\prod$}}\limits}
\newcommand{\sm}{\ds\mathop{\raise 1.5pt \hbox{$\sum$}}\limits}
\newcommand{\ffrac}[2]{\hbox{\ft $\ds\frac{#1}{#2}$}}
\newcommand{\ZK}{{\bf Z}_K}
\newcommand{\Norm}{{\bf N}}
\newcommand{\Trace}{{\bf T}}
\newcommand{\BB}{{\bf B}}
\newcommand{\BX}{{\bf X}}
\newcommand{\Sgn}{{\bf S}}
\newcommand{\pgcd}{{\bf gcd}}
\newcommand{\ram}{{\rm ram}}
\newcommand{\disc}{{\rm disc}}
\newcommand{\tr}{{\rm trace}}
\newcommand{\even}{{\rm even}}
\newcommand{\odd}{{\rm odd}}
\newcommand{\CD}{{\mathcal D}}
\newcommand{\CM}{{\mathcal M}}
\newcommand{\CT}{{\mathcal T}}
\newcommand{\CH}{{\mathcal H}}
\author[Georges Gras]{Georges Gras}
\address{Villa la Gardette -- 4, chemin de Ch\^ateau Gagni\`ere --
38520 Le Bourg d'Oisans, France -- Url: {\rm \url{http://orcid.org/0000-0002-1318-4414}}}
\email{g.mn.gras@wanadoo.fr}
\keywords{Real quadratic fields; Fundamental unit; Norm of units; 
Class field theory; PARI programs}
\subjclass{Primary  11R11, 11R27, 11R29, 11R37}
\begin{document}

\title[Norm of the fundamental unit of $\Q(\sqrt M)$]
{New characterization of the norm \\ of the fundamental unit of $\Q(\sqrt M)$ }

\date{May 24, 2023}

\begin{abstract}  
We give an elementary criterion for the norm of the fundamental unit $\varepsilon_K$ 
of $K=\Q(\sqrt M)$, $M$ square-free. More precisely, if $\varepsilon_K = a+b\sqrt M$, $a, b \in \Z$ 
or $\frac{1}{2}\Z$, its norm $\Sgn_K$ only depends on $m := \pgcd \big(\frac{a+1}
{\pgcd(a+1,b)}, M\big)$ and $m' := \pgcd \big(\frac{a-1}{\pgcd(a-1,b)}, M\big)$ as follows 
when $-1$ is a global norm: $\Sgn_K = -1$ if and only if $m=m'=1$ (resp. 
$m=m'=2$) for $M$ odd (resp. even) (Theorems \ref{mainresult} or \ref{maincriterion}).
\end{abstract}

\maketitle

\tableofcontents

\section{Introduction -- Main result} 
Let $K =: \Q(\sqrt M)$, $M \in \Z_{\geq 2}$ square-free, be a real quadratic field 
and let $\ZK$ be its ring of integers. Recall that $M$ is called the ``Kummer 
radical'' of $K$, contrary to any ``radical'' $R=M r^2$ giving the same field $K$.
We will write the elements of $\ZK$ under the form $\alpha = \frac{1}{2} (u + v \sqrt M)$,
with $u, v$ of same parity. We denote by $\Trace_{K/\Q} =: \Trace$ and $\Norm_{K/\Q} 
=: \Norm$, the trace and norm maps in $K/\Q$, so that $\Trace(\alpha) = u$ and 
$\Norm(\alpha) = \frac{1}{4}(u^2 - M v^2)$. 

\smallskip
We denote by $\varepsilon_K^{} >1$ the fundamental unit of $K$ and by 
$\Sgn_K =: \Sgn := \Norm(\varepsilon_K^{}) \in \{-1,1\}$ its norm. An obvious 
necessary condition for $\Sgn = -1$ is to have $-1 \in \Norm(K^\times)$,
equivalent to the fact that any odd prime ramified in $K/\Q$ is congruent 
to $1$ modulo $4$.

\medskip
The starting point of our result is the following observation proved by means of 
elementary applications of class field theory (see Theorem \ref{chevalley} 
assuming $-1 \in \Norm(K^\times)$ and Remark \ref{mn}\,(iii) in the case $-1 
\notin \Norm(K^\times)$):

\bigskip\noindent
{\it Set $M = \prod_{q \mid M} q$ for the prime divisors $q$ of $M$ and let 
${\mathfrak q} \mid q$ be the prime ideal of $K = \Q(\sqrt M)$ over $q$.
The fundamental unit $\varepsilon_K^{}$ of $K$ is of norm 
$-1$ if and only if the relation $\prod{\mathfrak q} = (\sqrt M)$ is the unique 
non-trivial relation of principality (in the ordinary sense) between the 
ramified prime ideals of $K$ (that is to say, the ${\mathfrak q}$'s dividing $(\sqrt M)$
and ${\mathfrak q}_2 \mid 2$ if $2 \nmid M$ ramifies, whence if $M \equiv 3 \pmod 4$).}

\bigskip
Of course, when $-1 \notin \Norm(K^\times)$ and when 
$2 \nmid M$ ramifies, one can verify the existence of a relation of principality, either of the 
form $\prod{\mathfrak q}^{e_q}$ (distinct from $(1)$ and $(\sqrt M)$) with exponents 
in $\{0,1\}$, or else ${\mathfrak q}_2$ principal (e.g., $M = 3 \times 17$ with ${\mathfrak q}_2$ 
principal, the ideals ${\mathfrak q}_3$ and ${\mathfrak q}_{17}$ being non-principal;
for more examples and comments, see Remark \ref{mn}\,(iii)).

\smallskip
Then we can state the main result, under the assumption $-1 \in \Norm(K^\times)$
(see Theorem \ref{maincriterion} for more details and informations):

\begin{theorem} \label{mainresult}
Let $\varepsilon_K^{} = a+b\sqrt M>1$, $a, b \in \Z$ or $\frac{1}{2}\Z$, be the 
fundamental unit of~$K$. We consider the integers $A + B\sqrt M$ and $A' + B'\sqrt M$, 
defined as follows:
\begin{equation*}
\begin{aligned}
\varepsilon_K^{} + 1 = a+1+b\sqrt M & =: g \,(A + B\sqrt M),\ \, 
\hbox{where} \  g := \pgcd\,(a+1,b) \in \Z_{>0}, \\
\varepsilon_K^{} - 1 = a-1+b\sqrt M&  =: g' (A' + B'\sqrt M), \ \, 
\hbox{where}\  g' := \pgcd\,(a-1,b) \in \Z_{>0}.
\end{aligned}
\end{equation*}
 
 \noindent
Let $m := \pgcd\,(A, M)$ and $m' := \pgcd\,(A', M)$. Then we have:

\medskip
(i) If $M$ is odd, $\varepsilon_K^{}$ is of norm $-1$ if and only if $m=m'=1$.
 
\smallskip
(ii) If $M$ is even, $\varepsilon_K^{}$ is of norm $-1$ if and only if $m=m'=2$.
\end{theorem}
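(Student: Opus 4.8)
\emph{Plan of proof.} The plan is to argue directly from $\Norm(\varepsilon_K^{}) = \pm 1$, treating $\Sgn_K = -1$ and $\Sgn_K = +1$ as separate cases, without passing through Theorem~\ref{chevalley}. For a prime $q \mid M$ let $\mathfrak q \mid q$ be the ramified prime of $K$, so $\mathfrak q^2 = (q)$, $v_{\mathfrak q}(\sqrt M) = 1$ (because $(\sqrt M)^2 = (M)$ and $q$ divides the square-free integer $M$ exactly once), and $v_{\mathfrak q}(x) = 2\,v_q(x)$ is even for every $x \in \Z$; similarly for $\mathfrak q_2 \mid 2$ when $M \equiv 3 \pmod 4$. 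The single computation I shall use repeatedly is a \emph{valuation lemma}: since $\pgcd(A,B) = 1$, the integers $v_{\mathfrak q}(A) = 2\,v_q(A)$ and $v_{\mathfrak q}(B\sqrt M) = 2\,v_q(B) + 1$ have opposite parities, so $v_{\mathfrak q}(A + B\sqrt M) = \min\{v_{\mathfrak q}(A),\, v_{\mathfrak q}(B\sqrt M)\}$, which equals $1$ if $q \mid A$ and $0$ otherwise; consequently $\mathfrak q$ divides $(A + B\sqrt M)$ — and then exactly once — precisely when $q \mid m$, and likewise $\mathfrak q$ divides $(A' + B'\sqrt M)$ exactly once precisely when $q \mid m'$. (When $\varepsilon_K^{} = \frac{1}{2}(u + v\sqrt M)$ with $u,v$ odd, one reads $A,B,A',B'$ off $2\varepsilon_K^{} \pm 2 = u \pm 2 + v\sqrt M$, and every step below applies verbatim with $a \pm 1$ replaced by $u \pm 2$, with $2$ replaced by $4$, and with $\Norm(\varepsilon_K^{}) = \pm 1$ replaced by $u^2 - Mv^2 = \pm 4$.)

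\emph{The implication $\Sgn_K = -1 \Rightarrow m,m'$ trivial.} Here $\varepsilon_K^2 = \Trace(\varepsilon_K^{})\,\varepsilon_K^{} + 1$, so $(\varepsilon_K^{}+1)(\varepsilon_K^{}-1) = (\varepsilon_K^2 - 1) = (\Trace(\varepsilon_K^{}))$ as ideals. If an odd prime $q \mid M$ divided $m$, then $v_{\mathfrak q}(\varepsilon_K^{}+1) = v_{\mathfrak q}(g) + v_{\mathfrak q}(A + B\sqrt M) = 2\,v_q(g) + 1$ would be odd, whereas $v_{\mathfrak q}(\varepsilon_K^{}+1) + v_{\mathfrak q}(\varepsilon_K^{}-1) = v_{\mathfrak q}(\Trace(\varepsilon_K^{})) = 2\,v_q(\Trace(\varepsilon_K^{}))$ is even; hence $v_{\mathfrak q}(\varepsilon_K^{}-1)$ is odd, in particular positive. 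Then $\varepsilon_K^{}+1$ and $\varepsilon_K^{}-1$ both lie in $\mathfrak q$, so $2 = (\varepsilon_K^{}+1) - (\varepsilon_K^{}-1) \in \mathfrak q$, i.e. $q \mid 2$ — impossible. The symmetric computation rules out any odd prime dividing $m'$. If $M$ is odd, this gives $m = m' = 1$. If $M$ is even, an easy congruence argument modulo $8$ applied to $a^2 - Mb^2 = -1$ forces $a$ and $b$ both odd, hence $g = \pgcd(a+1,b)$ and $g' = \pgcd(a-1,b)$ are odd, hence $2 \mid A$ and $2 \mid A'$, and therefore $2 \mid m$, $2 \mid m'$; combined with the absence of odd prime divisors, $m = m' = 2$.

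\emph{The implication $\Sgn_K = +1 \Rightarrow m$ or $m'$ non-trivial.} Now $(a-1)(a+1) = a^2 - 1 = Mb^2$. Choose an odd prime $q \mid M$ (one exists: if $M = 2$ then $\varepsilon_K^{} = 1 + \sqrt 2$ has norm $-1$). Since $(a+1) - (a-1) = 2$ and $q$ is odd, $q$ divides at most one of $a \pm 1$, so from $v_q(a-1) + v_q(a+1) = 1 + 2\,v_q(b)$ exactly one of the two valuations equals $1 + 2\,v_q(b)$ and the other is $0$. If $v_q(a+1) = 1 + 2\,v_q(b)$ then $v_q(g) = \min\{1 + 2\,v_q(b),\, v_q(b)\} = v_q(b)$, so $v_q(A) = 1 + v_q(b) \geq 1$, i.e. $q \mid A$ and hence $q \mid m$; symmetrically $v_q(a-1) = 1 + 2\,v_q(b)$ yields $q \mid m'$. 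Thus every odd prime of $M$ divides $m$ or $m'$, so $m > 1$ or $m' > 1$; and since that prime is $\geq 3$, also $m \neq 2$ or $m' \neq 2$. Together with the previous paragraph this proves (i) and (ii).

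\emph{On the main difficulty.} No single step is hard; the care is in the bookkeeping — keeping the valuation lemma and the ramification data consistent across $M \equiv 1, 2, 3 \pmod 4$ (notably the role of $\mathfrak q_2$, which enters Theorem~\ref{chevalley} but, pleasantly, never the argument above) and in the half-integer normalization of $\varepsilon_K^{}$. Alternatively one could deduce the theorem from Theorem~\ref{chevalley} by reading ``$q \mid m$ or $q \mid m'$ for every odd $q \mid M$'' as the statement that $\prod \mathfrak q = (\sqrt M)$ is \emph{not} the unique non-trivial principality relation among the ramified primes; the direct route seems shorter.
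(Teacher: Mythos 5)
Your proof is correct, and it takes a genuinely different route from the paper. The paper obtains the forward implication ($\Sgn=1$ forces $m$ or $m'$ non-trivial) from class field theory: the Chevalley--Herbrand formula and the ambiguous-class exact sequence (Theorem~\ref{chevalley}) show that $(\varepsilon_K^{}+1)$ and $(\varepsilon_K^{}-1)$, stripped of their rational factors, realize the unique pair of non-canonical principality relations among the ramified primes, and Theorem~\ref{maincriterion} then reads off $m$, $m'$ as their supports; the converse and the determination $m=m'=2$ in the even case are done by elementary congruences ($\varepsilon_K^{}\equiv\mp1\pmod{\mathfrak m}$ gives $\Sgn\equiv 1\pmod m$; and $a,b$ odd forces $2\mid A,A'$), the latter essentially identical to your even-case argument. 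You replace the class-field-theoretic input entirely: for $\Sgn=-1$ the parity of $\mathfrak q$-valuations in $(\varepsilon_K^{}+1)(\varepsilon_K^{}-1)=\Trace(\varepsilon_K^{})\,\varepsilon_K^{}$ together with the difference $2$ excludes odd primes from $m,m'$, and for $\Sgn=1$ the factorization $(a-1)(a+1)=Mb^2$ places every odd $q\mid M$ in $m$ or $m'$ (your valuation lemma and the half-integer normalization via $u^2-Mv^2=\pm4$ are handled correctly, and the coprimality $\pgcd(A,B)=1$ is used exactly where needed). What each approach buys: yours is self-contained, elementary, and in fact does not use the standing hypothesis $-1\in\Norm(K^\times)$, so it proves the equivalence slightly more generally; the paper's route costs more machinery but yields the finer content of Theorem~\ref{maincriterion}\,(ii) --- the ideal-theoretic meaning of $m$ and $n=M/m$ as complementary supports of the two non-canonical principality relations, with explicit generators --- which is what the rest of the paper (the principality computations and the density heuristics) actually exploits; your argument alone does not recover that interpretation, nor the uniqueness of the relation, but none of this is needed for the statement as given.
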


Let $\CD_{\leq \BX}$ be the set of discriminants $D \leq \BX$ corresponding to
quadratic fields $K $ such that $-1 \in \Norm(K^\times)$ and let $\CD^-_{\leq \BX}$
the subset of discriminants such that $\Sgn := \Norm(\varepsilon_K^{})=-1$, and put 
$\Delta_\disc^- := \ds \lim_{\BX \to \infty} \frac{\CD^-_{\leq \BX}}{\CD_{\leq \BX}}$. 
Recently, the Stevenhagen conjecture \cite[Conjecture 1.4]{St}, for the density 
$\Delta_\disc^- = 0.5805...$, has been proved from techniques developed by 
Koymans--Pagano \cite{KoPa1,KoPa2}, in relation with that of Smith \cite{Sm}, 
based on estimations of the $2^k$-ranks of the class groups of the $K$'s, 
$k \geq 2$. It raises the question of whether our point of view can constitute 
a way to define another notion of density since it does not seem compatible 
with the common principle of classification of the fields via their discriminants; 
this will be discussed in Section \ref{densities}.

\section{Characterizations of \texorpdfstring{$\Norm(\varepsilon_K^{})=-1$}{Lg}} 

In this Section, we give (Theorems \ref{chevalley}, \ref{maincriterion}) a new 
characterization of the norm $\Sgn$ of the fundamental unit $\varepsilon_K^{}$ of 
$K=\Q(\sqrt M)$ when $-1 \in \Norm(K^\times)$, in particular for the case $\Sgn=-1$ 
giving the solvability of the norm equation $u^2 - M v^2 = -4$, $u, v \in \Z_{>0}$.

\subsection{Class field theory results involving \texorpdfstring{$\Norm(\varepsilon_K^{})$}{Lg}}

We will use the classical class field theory context given by the Chevalley--Herbrand
formula and by the standard exact sequence defining the group of invariant classes
(for simplicity, we refer to our book \cite{Gra1}, but any classical reference book may agree).
Of course, in the case of quadratic fields, the theory is equivalent to that of quadratic 
forms and goes back to Gauss and, after that in a number field context, to Kummer, 
Hilbert, Takagi, Hasse, Chevalley--Herbrand, Fr\"olich, Furuta, Ishida, Leopoldt, for 
particular formulas in the area of genus theory, but we are convinced that the general 
framework may be used, for instance, in relative quadratic extensions $K/k$, since the 
index $(E_k : \Norm E_K)$ of groups of units, when $E_k \subset \Norm(K^\times)$, is as 
mysterious as $(E_\Q : \Norm E_K)$ for $K = \Q(\sqrt M)$, when $-1 \in \Norm(K^\times)$
(for some examples in this direction, see \cite[Th\'eor\`eme 3.2, Corollaires 3.3, 3.4]{Gra2}).

\begin{theorem}\label{chevalley}
Consider Kummer radicals $M = q_1 \cdots q_r$ or $M = 2 \cdot  q_2 \cdots q_r$, 
$r \geq 1$, with odd primes $q_i \equiv 1 \pmod 4$ ($r=1$ gives $M= q_1$ or $M=2$)
and let $K = \Q(\sqrt M)$. If the prime $q$ (odd or not) divides $M$ we denote by 
${\mathfrak q}$ the prime ideal of $K $ above $q$.

\smallskip\noindent
(i) The fundamental unit $\varepsilon_K^{}$ is of norm $\Sgn = -1$ if and only 
if the relation $\prd_{q \mid M} {\mathfrak q} = (\sqrt M)$ is the unique 
relation of principality, distinct from $(1)$, between the ramified prime 
ideals of $K$ (this relation and the trivial one will be called the canonical relations).

\smallskip\noindent
(ii) In the case $\Sgn = 1$, the unique non-canonical relation ${\mathfrak m} 
= \prd_{q \mid m} {\mathfrak q}  = (\alpha)$, of support $m \mid M$
distinct from $1$ and $M$, is given by $\varepsilon_K^{}+1 =: g\, \alpha$, 
where the rational integer $g$ is maximal and, similarly, the complementary 
relation ${\mathfrak n} = \prd_{q \mid n} {\mathfrak q}  = (\beta)$, of support 
$n = \frac{M}{m}$ distinct from $1$ and $M$, is given by $\varepsilon_K^{} -1 
=: g' \, \beta$ where the rational integer $g'$ is maximal.
\end{theorem}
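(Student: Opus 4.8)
The plan is to run the Chevalley--Herbrand machinery for the quadratic extension $K/\Q$ and track units carefully. First I would recall the standard exact sequence for the group of ambiguous (invariant) classes $\mathscr{C}\ell_K^{G}$, $G = \Gal(K/\Q)$: writing $t$ for the number of ramified places of $K/\Q$ (the odd $q_i \mid M$, together with $2$ if $M \equiv 3 \pmod 4$, and including no infinite place since $K$ is real), the Chevalley--Herbrand formula gives
\begin{equation*}
\order \, \mathscr{C}\ell_K^{G} \;=\; \order\, \mathscr{C}\ell_\Q \cdot \frac{2^{\,t-1}}{(E_\Q : E_\Q \cap \Norm K^\times)}\,,
\end{equation*}
but the decisive input is rather the exact sequence
\begin{equation*}
1 \too E_\Q/\Norm(E_K) \too \Big(\plus_{v\ \ram} \tfrac12\Z/\Z\Big) \too \mathscr{C}\ell_K^{G} \too \mathscr{C}\ell_\Q \too 1,
\end{equation*}
where the middle term is the group generated by the classes of the ramified primes modulo the principal ones; since $\mathscr{C}\ell_\Q = 1$ this identifies the group of principality relations between the ramified primes with $E_\Q/\Norm(E_K) \hookrightarrow \Z/2\Z$. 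The two cases $\Sgn = -1$ and $\Sgn = 1$ correspond exactly to whether $-1 \in \Norm(E_K)$, i.e. to whether this index is $1$ or $2$.

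Next, the key point is to pin down the generators. Each ramified prime $\mathfrak q \mid q$ satisfies $\mathfrak q^2 = (q)$, so any principality relation among the $\mathfrak q$'s has the form $\prd_{q \mid m}\mathfrak q = (\alpha)$ for a squarefree divisor $m \mid M$ (after absorbing squares into the rational part), with $\Norm(\alpha) = \pm m$ and in fact $\Norm(\alpha) = m$ up to the global obstruction; squaring gives $(\alpha^2) = (m)$, hence $\alpha^2/m$ is a unit, so $\alpha^2 = \pm m\, \varepsilon_K^{k}$. When $\Sgn = -1$ the group $E_\Q/\Norm(E_K)$ is trivial, so the only relations are $(1)$ (the trivial one) and the one coming from $\sqrt M$ itself, namely $\prd_{q\mid M}\mathfrak q = (\sqrt M)$ — this gives (i), since $\sqrt M$ has norm $-M$ and realises the nontrivial class in $\plus \frac12\Z/\Z$ modulo principal ideals being killed. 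When $\Sgn = 1$, $-1 \notin \Norm(E_K)$, the quotient has order $2$, so besides the canonical relations there is exactly one more, say $\mathfrak m = \prd_{q\mid m}\mathfrak q = (\alpha)$ with $1 < m < M$, and $\mathfrak n := \prd_{q \mid n}\mathfrak q$ with $n = M/m$ is then the product $\mathfrak m \cdot (\sqrt M)^{-1}\cdot(\text{units})$, hence also principal, $\mathfrak n = (\beta)$.

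The last and most delicate step is the explicit formula relating $\alpha, \beta$ to $\varepsilon_K \pm 1$. Here I would compute $(\varepsilon_K + 1)(\varepsilon_K - 1) = \varepsilon_K^2 - 1 = \varepsilon_K(\varepsilon_K - \varepsilon_K^{-1})$ and, using $\Sgn = 1$ so that $\overline{\varepsilon_K} = \varepsilon_K^{-1}$, note $\Norm(\varepsilon_K + 1) = 1 + 1 + \Trace(\varepsilon_K) $ is determined, while more usefully $(\varepsilon_K+1)\overline{(\varepsilon_K+1)} = \Norm(\varepsilon_K+1)$ and $(\varepsilon_K - 1) = -\overline{\varepsilon_K}(\varepsilon_K + 1)\cdot(\text{sign})$ up to a unit, which forces the ideal $(\varepsilon_K + 1)$ to be supported exactly on the ramified primes (any other prime factor $\mathfrak p$ would, by conjugation, divide $(\varepsilon_K - 1)$ as well, hence divide $(\varepsilon_K+1)-(\varepsilon_K-1)=(2)$, handled separately, or divide $2\varepsilon_K$, impossible for $\mathfrak p$ unramified and coprime to $2$). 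Thus $(\varepsilon_K + 1) = g\,\mathfrak m$ with $g \in \Z_{>0}$ maximal and $\mathfrak m$ the ideal of some $m \mid M$, $m \neq M$ (and $m \neq 1$ since $\varepsilon_K + 1$ is not $g$ times a unit as $\varepsilon_K \neq $ a root of unity times rational); symmetrically $(\varepsilon_K - 1) = g'\,\mathfrak n$ with $n \mid M$, and $\mathfrak m \mathfrak n = (\sqrt M)\cdot(\text{unit})$ forces $n = M/m$. I expect the main obstacle to be exactly this support computation — proving that no \emph{unramified} prime divides $\varepsilon_K \pm 1$ and that the exponents of the ramified primes are $0$ or $1$ — together with ruling out $m \in \{1, M\}$; the parity/$2$-adic bookkeeping when $M \equiv 3 \pmod 4$ (so that $2$ ramifies and $\varepsilon_K$ may have half-integer coordinates) is the fiddly part that needs separate care, and the maximality of $g$, $g'$ is what guarantees $A, B$ (resp.\ $A', B'$) are coprime so that $\mathfrak m = (A + B\sqrt M, \ldots)$ is genuinely the squarefree radical ideal.
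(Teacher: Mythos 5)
Your overall strategy is the paper's: part (i) via the Chevalley--Herbrand formula together with the ambiguous-class exact sequence, and part (ii) via the ideals $(\varepsilon_K^{}\pm 1)$. But several steps, as written, are not correct. First, your exact sequence is misstated: taken literally (with $\CH_\Q=1$) it gives $\order\CH_K^G=2^{t}/\order(E_\Q/\Norm E_K)$, contradicting the Chevalley--Herbrand count $2^{t-1}$, and when $\Sgn=-1$ it would say the map $\plus_{v\,\ram}\frac12\Z/\Z\to\CH_K^G$ is injective, i.e.\ that there is \emph{no} nontrivial relation --- yet the canonical relation $\prod_{q\mid M}\mathfrak q=(\sqrt M)$ always exists. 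The correct statement (the paper's sequence \eqref{ambiges}) is $1\to\CH_K^\ram\to\CH_K^G\to(\langle-1\rangle\cap\Norm(K^\times))/\langle\Sgn\rangle\to1$, so that the group of principality relations among the ramified primes has order $2$ (only the canonical one) when $\Sgn=-1$ and order $4$ when $\Sgn=1$; your conclusions match this, but only because you silently replace your sequence by the correct count, so this lemma must be restated.

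Second, in part (ii) the support computation rests on a false identity: $(\varepsilon_K^{}-1)$ is \emph{not} $-\overline{\varepsilon_K^{}}(\varepsilon_K^{}+1)$ up to a unit (indeed the two ideals have \emph{complementary} ramified supports $m$ and $n=M/m$, which is exactly what you are trying to prove), and the ensuing inference ``any unramified $\mathfrak p$ dividing $\varepsilon_K^{}+1$ divides $\varepsilon_K^{}-1$, hence divides $2$'' fails: a split prime entering the rational content $g$ of $\varepsilon_K^{}+1$ need not divide $\varepsilon_K^{}-1$. The correct route, which you almost write down, is that \emph{each} ideal $(\varepsilon_K^{}\pm1)$ is separately $\sigma$-invariant, since $\Sgn=1$ gives $\overline{\varepsilon_K^{}+1}=\varepsilon_K^{-1}(\varepsilon_K^{}+1)$ and $\overline{\varepsilon_K^{}-1}=-\varepsilon_K^{-1}(\varepsilon_K^{}-1)$; an invariant ideal is a rational ideal times a squarefree product of ramified primes, which yields $(\varepsilon_K^{}+1)=g\,\mathfrak m$, $(\varepsilon_K^{}-1)=g'\mathfrak n$. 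Finally, the non-canonicity is not established: you assert $m\ne M$ with no argument, and your reason for $m\ne1$ (``$\varepsilon_K^{}+1$ is not $g$ times a unit'') is not a proof and genuinely needs $\Sgn=1$ --- for $M=5$ (where $\Sgn=-1$) one has $\varepsilon_K^{}+1=\varepsilon_K^2$, a unit. The clean argument, essentially the paper's, is to apply $1-\sigma$: $(\varepsilon_K^{}+1)^{1-\sigma}=\varepsilon_K^{}$ and $(\varepsilon_K^{}-1)^{1-\sigma}=-\varepsilon_K^{}$, whereas $g u$ and $g\sqrt M\,u$ with $u\in E_K$ have $(1-\sigma)$-images $\pm\varepsilon_K^{2k}$ (using $\varepsilon_K^{1-\sigma}=\varepsilon_K^2$ and $(\sqrt M)^{1-\sigma}=-1$), never equal to $\pm\varepsilon_K^{}$; this simultaneously rules out $m\in\{1,M\}$ and identifies the relations from $\varepsilon_K^{}+1$ and $\varepsilon_K^{}-1$ as the two complementary non-canonical ones.
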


\begin{proof}
(i) Let $G := {\rm Gal}(K/\Q) =: \langle \sigma \rangle$. The Chevalley--Herbrand 
formula \cite[pp.\,402--405]{Che} gives, for the ordinary $2$-class group $\CH_K$, 
$\order \CH_K^G = \displaystyle \frac{2^{r-1}}{(\langle -1 \rangle : \langle -1\rangle 
\cap \Norm(K^\times))} = 2^{r-1}$ since $-1 \in \Norm(K^\times)$ by assumption. 
Moreover, we have the classical exact sequence:
\begin{equation} \label{ambiges}
1 \to \CH_K^\ram \too \CH_K^G \too \langle -1\rangle \cap 
\Norm(K^\times) / \langle \Sgn\rangle = \langle -1\rangle/\langle \Sgn\rangle \to 1, 
\end{equation}
where $\CH_K^\ram$ is the subgroup generated by the classes of 
the ramified prime ideals ${\mathfrak q}$ of $K$. Indeed, for an invariant class of an ideal 
${\mathfrak a}$, one associates with $(\alpha) := {\mathfrak a}^{1-\sigma}$, 
$\alpha \in K^\times$, the sign $\Norm(\alpha) = \pm1 \in
\langle -1\rangle \cap \Norm(K^\times) = \langle -1 \rangle$, defined modulo
$\langle \Sgn \rangle$ since $\alpha$ is defined modulo 
$E_K := \langle -1,\varepsilon_K^{} \rangle$.

\smallskip
$\bullet$ Image. Since there exists $\beta \in K^\times$ such that
$-1 = \Norm(\beta)$, the ideal $(\beta)$, being of norm $(1)$, is of the form 
${\mathfrak b}^{1-\sigma}$, giving a pre-image in $\CH_K^G$ (surjectivity). 

\smallskip
$\bullet$ Kernel. If ${\mathfrak a}^{1-\sigma} =: (\alpha)$ with $\Norm(\alpha) = \Sgn$, 
one may suppose, 
up to a unit, that $\Norm(\alpha) = 1$, whence $\alpha = \theta^{1-\sigma}$
(Hilbert's Theorem 90), and ${\mathfrak a} (\theta)^{-1}$ is an invariant ideal, product 
of a rational ideal by a product of ramified prime ideals of $K$. 

\smallskip
This yields $\order \CH_K^\ram = \ffrac{2^{r-1}}{(\langle -1 \rangle : \langle \Sgn\rangle)}$, 
where we recall that $r$ is the number of ramified prime ideals of $K$;
let's examine each case for $\Sgn$: 

\smallskip
If $\Sgn = -1$, then $\order \CH_K^\ram = 2^{r-1}$ and the unique non-trivial relation of 
principality between the ramified primes is the canonical one, $\prd {\mathfrak q} = (\sqrt M)$.

\smallskip
If $\Sgn = 1$, then $\order \CH_K^\ram =  2^{r-2}$, necessarily with $r \geq 2$, 
and another relation of principality does exist, given by a suitable product  
$\prd {\mathfrak q}^{e_q} = (\alpha)$, $e_q \in \Z$; since any ${\mathfrak q}^2$ 
is principal, one may write the relation under the 
form ${\mathfrak m} = \prd_{q \mid m} {\mathfrak q} 
= (\alpha)$ of support $m \mid M$. For $n= \frac{M}{m}$ 
we get the non-canonical analogous relation ${\mathfrak n} = 
\prd_{q \mid n} {\mathfrak q} = (\beta)$, with 
${\mathfrak m}{\mathfrak n} = (\alpha \beta)= (\sqrt M)$; which 
proves the first claim.

\smallskip
(ii) From ${\mathfrak m} = (\alpha)$ when $\Sgn = 1$, we get 
$\alpha^{1-\sigma} = \pm \varepsilon_K^k$, $k \in \Z$; since 
$\varepsilon_K^2 = \varepsilon_K^{1+\sigma+1-\sigma} =
\varepsilon_K^{1-\sigma}$, we may assume that ${\mathfrak m} = (\alpha)$
with  $\alpha^{1-\sigma} \in\{\pm 1, \pm \varepsilon_K^{}\}$. If
$\alpha^{1-\sigma} = \pm 1$, then $\alpha \in \{1, \sqrt M\} \times \Q^\times$
gives the canonical relations (absurd);
thus $\alpha^{1-\sigma} = \pm \varepsilon_K^{}$.
If for instance $\alpha^{1-\sigma} = 1$, then $(\alpha \sqrt M)^{1-\sigma} =-1$
and this gives the complementary relation ${\mathfrak n} = (\beta)$
with $\beta^{1-\sigma} = -\varepsilon_K^{}$.

Since we have $(\varepsilon_K^{}+1)^{1-\sigma} = \ffrac{\varepsilon_K^{}+1}
{\varepsilon_K^\sigma+1} = \ffrac{\varepsilon_K^{}+1}
{\varepsilon_K^{-1}+1} = \varepsilon_K^{}$ and similarly
$(\varepsilon_K^{}-1)^{1-\sigma} = -  \varepsilon_K^{}$,
the quotients $\ffrac{\varepsilon_K^{}+1}{\alpha}$ and
$\ffrac{\varepsilon_K^{}-1}{\beta}$ are invariants under $\sigma$,
hence are rational numbers; this proves the second claim of the theorem 
writing $\varepsilon_K^{}+1 = g \alpha$ and $\varepsilon_K^{}-1=g' \beta$
in an obvious manner ($g$ and $g'$ are the maximal rational integer factors 
of the quadratic integers $\varepsilon_K^{} \pm 1$).
\end{proof}

\begin{remarks}\label{mn}{\rm  
(i) If $r=1$, one finds again the well-known result $\Sgn = -1$ for the prime 
Kummer radicals $M=q \equiv 1 \pmod 4$ and for $M=2$. The properties given 
by Theorem \ref{chevalley}, due to the Chevalley--Herbrand formula, come from 
the ``product formula'' of Hasse's norm residue symbols in class field theory
\cite[Theorem II.3.4.1]{Gra1}.

\smallskip
(ii) The theorem is also to be related with that of Trotter \cite[Theorem, p.\,198]{Tro} 
leading, in another framework, to the study of the equations $m x^2 - n y^2 = \pm 4$,
equivalent to the principalities of ${\mathfrak m}$ and ${\mathfrak n}$, since, for instance,
${\mathfrak m} = (\frac{1}{2}(x+y \sqrt M))$ is equivalent to $x^2 - M y^2 = \pm 4 m$,
whence $m x^2 - n y^2 = \pm 4$; then, some {\it sufficient conditions of solvability} of these 
equations are given by means of properties of suitable quadratic residues.}
\end{remarks}

\begin{remark}\label{norm1}{\rm  
We intend to give some properties of the case $M = q_1 \cdots q_r$ odd,
when one assumes that $-1 \notin \Norm(K^\times)$ (there exists $q \mid M$ 
such that $q \equiv 3 \pmod 4$), then $\Sgn = 1$, the Chevalley--Herbrand 
formula becomes $\order \CH_K^G = \ds \frac{2^{r+\delta-1}}
{(\langle -1 \rangle : \langle -1\rangle \cap \Norm(K^\times))} = 2^{r+\delta-2}$, 
where $\delta = 1$ (resp.~$0$) if $M \equiv 3 \pmod 4$ (resp. if not) and the 
exact sequence \eqref{ambiges} becomes the isomorphism $\CH_K^\ram 
\simeq \CH_K^G$. Nevertheless, if $M \equiv 1 \pmod 4$ (thus an even 
number of primes $q \equiv 3 \pmod 4$ dividing $M$), we have $\delta = 0$, 
so that one obtains the pair of non-canonical relations of principality from the 
computation of $m$ and $n$ from $\varepsilon_K^{} \pm 1$.

\smallskip
Now, we assume that $M \equiv 3 \pmod 4$ implying the ramification of $2$ with $M$ 
odd and $\Sgn = 1$. So we have $\delta = 1$, $\order \CH_K^\ram = 2^{r-1}$
and the existence of a group of relations of principality of order $4$ between the $r+1$
ramified primes, including the canonical ones.

\smallskip
$\bullet$ If ${\mathfrak q}_2 \mid 2$ is principal, this gives the non-canonical relation,
and the process using $\varepsilon_K^{} \pm 1$ only gives the canonical relations 
$(1)$ and $(\sqrt M)$.

\smallskip
$\bullet$ If ${\mathfrak q}_2 \mid 2$ is not principal, this implies the existence of a non-canonical 
relation of principality, distinct from $(1)$ and $(\sqrt M)$. We intend to show that this 
relation is not of the form ${\mathfrak q}_2 \cdot {\mathfrak m}$ principal, for ${\mathfrak m}$ 
distinct from $(1)$ and $(\sqrt M)$. Otherwise, we have: 
\begin{equation}\label{1}
{\mathfrak q}_2  {\mathfrak m} = (\alpha) \ \, {\rm and}\ \,
{\mathfrak q}_2  {\mathfrak n} = (\beta),\ \  {\mathfrak m}{\mathfrak n} = (\sqrt M), 
\end{equation}
whence the existence of units $\varepsilon$, $\varepsilon'$ such that:
$\alpha^2  \varepsilon = 2m \ \, {\rm and}\ \, \beta^2 \varepsilon' = 2 n$;
modulo the squares of units one may choose $\alpha$ and $\beta$ such that:
$$\alpha^2 = \pm 2m \ \, {\rm or}\ \,\alpha^2 \varepsilon_K^{} =\pm 2m \ \ \ {\rm and} \ \ \
\beta^2 =\pm 2n \ \, {\rm or}\ \,\beta^2 \varepsilon_K^{} =\pm 2m.$$
The cases $\pm 2m$ and $\pm 2n$ does not hold since the Kummer radical $M$ is unique. So:
\begin{equation}\label{2}
\alpha^2  \varepsilon_K^{} = 2m \ \ \ {\rm and}\ \ \ \beta^2 \varepsilon_K^{} = 2 n.
\end{equation} 

In the same way, we have, from the principality relations \eqref{1}, $\alpha^{1-\sigma} 
= \eta$ and $\beta^{1-\sigma} = \eta'$ that we may write (using $\varepsilon_K^2 = 
\varepsilon_K^{1-\sigma}$ since $\Sgn = \varepsilon_K^{1+\sigma} = 1$):
$$\alpha^{1-\sigma} = \pm 1 \ \, {\rm or}\ \,\alpha^{1-\sigma} = \pm \varepsilon_K^{}
 \ \, {\rm and}\ \, \beta^{1-\sigma} = \pm 1 \ \, {\rm or}\ \,\beta^{1-\sigma} = \pm \varepsilon_K^{}. $$
As above, the cases $\pm 1$ are excluded (e.g., $\alpha^{1-\sigma} = \pm 1$ gives
$\alpha \in \Q^\times$ or $\Q^\times \cdot \sqrt M$). Thus:
\begin{equation}\label{3}
\alpha^{1-\sigma} = \pm \varepsilon_K^{} \ \, {\rm and}\ \, \beta^{1-\sigma} = \pm \varepsilon_K^{}.
\end{equation}

\smallskip
From \eqref{2} one gets the relation $\alpha^2 \beta^2 \varepsilon_K^2 = 2m \,2n = 4 M$
and then $\alpha \beta \varepsilon_K^{} = \pm 2 \sqrt M$, giving 
$(\alpha \beta \varepsilon_K)^{1-\sigma} = -1$. Using relations \eqref{3}, the previous relation leads to:
$$\pm \varepsilon_K^{} \cdot \varepsilon_K^{}\cdot \varepsilon_K^2 = -1\ \, {\rm (absurd)}. $$

So, when ${\mathfrak q}_2$ is non-principal, the non-canonical relations are
of the form ${\mathfrak m}$ and ${\mathfrak n}$, given by the usual process.
Let's give few examples using Program \ref{program}:

\smallskip
\ft\begin{verbatim}
M          relations                        M          relations
3 [3]       1   3   []  q2 principal        51 [3,17]   1   51  [0] q2 principal
6 [2,3]     2   3   []                      55 [5,11]   11  5   [1] q2 non principal 
7 [7]       1   7   []  q2 principal        57 [3,19]   3   19  []  D=M
11 [11]     1   11  []  q2 principal        59 [59]     59  1   []  q2 principal 
14 [2,7]    7   2   []                      62 [2,31]   2   31  []  
15 [3,5]    3   5   [1] q2 non principal    66 [2,3,11] 33  2   [0]
19 [19]     1   19  []  q2 principal        67 [67]     67  1   []  q2 principal 
21 [3,7]    3   7   []  D=M                 69 [3,23]   23  3   []  D=M
22 [2,11]   2   11  []                      70 [2,5,7]  14  5   [1] q2 non principal 
23 [23]     1   23  []  q2 principal        71 [71]     1   71  []  q2 principal 
30 [2,3,5]  5   6   [1] q2 non principal    77 [7,11]   7   11  []  D=M
31 [31]     1   31  []  q2 principal        78 [2,3,13] 26  3   [1] q2 non principal 
33 [3,11]   11  3   []  D=M                 79 [79]     1   79  [0] q2 principal 
35 [5,7]    5   7   [1] q2 non principal    83 [83]     83  1   []  q2 principal 
38 [2,19]   2   19  []                      86 [2,43]   43  2   []  
39 [3,13]   13  3   [1] q2 non principal    87 [3,29]   3   29  [1] q2 non principal
42 [2,3,7]  6   7   [1] q2 non principal    91 [7,13]   13  7   [1] q2 non principal  
43 [43]     1   43  []  q2 principal        93 [3,31]   31  3   []  D=M
46 [2,23]   2   23  []                      94 [2,47]   2   47  []
47 [47]     1   47  []  q2 principal        95 [5,19]   5   19  [1] q2 non principal 
\end{verbatim}\ns

\smallskip
The mention ${\sf D=M}$ means that $2$ does not ramify, so it does not
intervene in the relations; the mention ${\sf q2\ principal}$ is then
the unique non-canonical relation and the mention ${\sf q2\ non\ principal}$ 
occurs when $2$ ramifies but there exists a pair of non-canonical relations
$({\mathfrak m}, {\mathfrak n})$ given in the left column.
A box ${\sf [\ ] }$ means a trivial class group and ${\sf [0]}$ is
equivalent to the principality of ${\mathfrak q}_2$ when the class group
in non-trivial and ${\sf [a,\ b, ...]}$, with not all zero ${\sf a,\, b, ...}$, gives 
the components of the class of ${\mathfrak q}_2$ on the PARI basis of the 
class group. 

\smallskip
The first ``non-trivial'' example is $M=51$, where $\varepsilon_{51}^{}
= 50+7\sqrt {51}$, giving $\varepsilon_{51}^{}+1= 51 +7 \sqrt {51}$ ($m = 51$)
and $\varepsilon_{51}^{}-1 =7 \,(7 + \sqrt {51})$ ($n = 1$), thus the canonical relations.}
\end{remark}

The case $-1 \notin \Norm(K^\times)$ being without any mystery regarding
the norm $\Sgn$, we will assume $-1 \in \Norm(K^\times)$ in the sequel.

\subsection{Computation of the non-canonical relations of principality}
The following program computes, when  $-1 \in \Norm(K^\times)$ and $\Sgn = 1$, the 
non-canonical relations of principality between the ramified primes of $K$ to make statistics 
and notice that any kind of relation seems to occur with the same probability for $r$ fixed. 

\smallskip
It uses the arithmetic information given by the PARI instructions ${\sf K=bnfinit(x^2-M)}$ 
and ${\sf bnfisprincipal(K,A)}$ testing principalities. When $\Sgn = -1$, the program
gives only the trivial list ${\sf L=List([1,1,1,1])}$ corresponding to the canonical relation.
The parameter ${\sf Br}$ forces ${\sf r \geq Br}$.
As illustration, let's give examples, for $r = 5$, of the relations of principality 
${\mathfrak m} = (\alpha)$, ${\mathfrak n} = (\beta)$, by means of the exponents 
${\sf [e_1,...,e_r]}$ given by the list~${\sf L}$ and where the radical $M$ is given via the 
list of its prime divisors; for instance: 
$${\sf M=[2,5,13,17,37]\ \ L=List([0,1,0,1,1])},$$ 

\noindent
means ${\mathfrak q}_{5} \cdot {\mathfrak q}_{17} \cdot {\mathfrak q}_{37}$ principal
(or ${\mathfrak q}_{2} \cdot {\mathfrak q}_{13}$ principal).

\smallskip
\ft\begin{verbatim}
COMPUTATION OF THE RELATIONS OF PRINCIPALITY
{Br=5;for(M=2,10^6,if(core(M)!=M,next);r=omega(M);if(r<Br,next);
i0=1;if(Mod(M,2)==0,i0=2);f=factor(M);ellM=component(f,1);
for(i=i0,r,c=ellM[i];if(Mod(c,4)!=1,next(2)));
L0=List;for(i=1,r,listput(L0,0));P=x^2-M;K=bnfinit(P,1);
print();print("M=",ellM);F=idealfactor(K,M);
for(k=1,2^r-1,B=binary(k);t=#B;L=L0;for(i=1,t,listput(L,B[i],r-t+i));A=1;
for(j=1,r,e=L[j];if(e==0,next);A=idealmul(K,A,component(F,1)[j]));
Q=bnfisprincipal(K,A)[1];if(Q==0,print("L=",L))))}
\end{verbatim}\ns

\smallskip
\ft\begin{verbatim}
M=[2,5,13,17,37]   L=[0,1,0,1,1]       M=[2,5,13,17,53]   L=[0,0,1,0,1]
M=[2,5,13,29,37]   L=[0,1,1,0,0]       M=[2,5,13,37,41]   L=[0,1,1,0,1]
M=[2,5,13,17,109]  L=[0,1,0,1,1]       M=[2,5,17,37,41]   L=[0,1,0,1,0]
M=[2,5,13,29,73]   L=[0,1,1,0,0]       M=[2,5,13,41,53]   L=[0,0,1,1,1]
M=[2,5,13,37,61]   L=[0,1,1,0,0]       M=[2,5,17,29,61]   L=[0,0,0,1,1]
M=[2,5,13,17,137]  L=[0,1,1,1,0]       M=[2,5,13,17,149]  L=[0,1,0,1,1]
M=[2,5,17,29,73]   L=[0,0,1,0,1]       M=[2,5,13,41,73]   L=[0,1,1,1,0]
M=[2,5,13,29,109]  L=[0,1,0,0,1]       M=[2,5,17,41,61]   L=[0,1,0,0,1]
M=[2,5,13,37,89]   L=[0,0,1,1,1]       M=[2,5,13,17,197]  L=[0,1,0,1,1]
M=[2,5,17,29,89]   L=[0,1,0,1,1]       M=[2,5,29,37,41]   L=[0,1,1,0,1]
M=[2,5,17,29,97]   L=[0,0,1,0,1]       M=[2,5,17,29,109]  L=[0,1,0,1,0]
M=[2,5,17,53,61]   L=[0,1,1,0,1]       M=[2,5,17,37,89]   L=[0,1,0,1,1]
M=[2,5,13,29,149]  L=[0,1,0,0,1]       M=[2,5,13,61,73]   L=[0,1,1,0,0]
M=[2,5,17,37,97]   L=[0,0,1,0,1]       M=[2,5,13,53,89]   L=[0,0,1,1,1]
M=[2,5,17,37,101]  L=[0,1,0,1,0]       M=[2,5,13,17,293]  L=[0,1,0,1,1]
M=[2,5,13,29,173]  L=[0,0,1,0,1]       M=[2,5,29,37,61]   L=[0,0,1,0,1]
M=[2,5,13,37,137]  L=[0,1,1,0,1]       M=[2,5,17,29,137]  L=[0,1,1,1,0]
M=[2,5,17,41,97]   L=[0,0,1,0,1]       M=[2,5,17,37,113]  L=[0,0,1,0,1]
M=[2,5,17,29,149]  L=[0,1,0,1,0]       M=[2,5,13,29,197]  L=[0,0,1,0,1]
M=[2,5,13,61,97]   L=[0,1,1,0,0]       M=[2,5,17,29,157]  L=[0,1,1,1,0]
M=[2,5,13,17,353]  L=[0,1,1,1,0]       M=[2,13,17,29,61]  L=[0,0,0,1,1]
M=[2,5,29,37,73]   L=[0,1,1,0,1]       M=[2,5,17,53,89]   L=[0,0,0,0,1]
M=[2,5,37,41,53]   L=[0,0,1,1,1]       M=[2,5,13,17,373]  L=[0,0,1,0,1]
M=[2,5,13,37,173]  L=[0,0,1,1,0]       M=[2,5,13,73,89]   L=[0,1,1,0,1]
M=[2,5,29,41,73]   L=[0,1,1,1,0]       M=[2,5,13,37,181]  L=[0,1,0,1,0]
M=[2,5,17,29,181]  L=[0,1,0,0,1]       M=[2,5,17,53,101]  L=[0,1,1,0,1]
M=[2,5,13,73,97]   L=[0,0,0,1,1]       M=[2,5,17,61,89]   L=[0,1,0,1,1]
M=[2,5,37,41,61]   L=[0,1,0,1,1]       M=[2,5,17,37,149]  L=[0,1,0,1,0]
M=[2,13,17,41,53]  L=[0,1,0,0,1]       M=[2,5,17,29,197]  L=[0,1,0,0,1]
\end{verbatim}\ns

\subsection{Main characterization of \texorpdfstring{$\Norm(\varepsilon_K^{})$}{Lg}}
Assume that $-1 \in \Norm(K^\times)$ and let $\Sgn := \Norm(\varepsilon_K^{})$. 

\smallskip
When $\Sgn=1$, the non-canonical relation ${\mathfrak m} = \prod_{q \mid m}
{\mathfrak q} = (\alpha)$, of support $m \mid M$ distinct from $1$ and $M$, 
comes from Theorem \ref{chevalley}\,(ii); it suffices to determine the quadratic 
integer $\alpha$, without any rational factor, deduced from $\varepsilon_K^{}+1$, 
this giving ${\mathfrak m}$. In the same way $\varepsilon_K^{}-1$ gives 
${\mathfrak n} = (\beta)$ of support $n = \frac{M}{m}$ such that 
${\mathfrak m}{\mathfrak n} = (\alpha \beta) = (\sqrt M)$.

\smallskip
Let's give some numerical examples of the process leading to the result:

\medskip
(i) For $M=15170 = 2 \cdot 5 \cdot 37 \cdot 41$, $\varepsilon_K^{} = 739 + 6 \sqrt M$ and:
$$\varepsilon_K^{} + 1 = 740 + 6 \sqrt M = 2 \times (370 + 3 \sqrt M), $$ 
where $2 = \pgcd\,(740,6)$; then $\pgcd\,(370,M) = 370 = 2 \cdot 5 \cdot 37$, whence the 
principality of ${\mathfrak m} = {\mathfrak q}_2{\mathfrak q}_{5} {\mathfrak q}_{37}$,
which immediately gives the principality of ${\mathfrak q}_{41}$ that can be obtained from:
$$-\varepsilon_K^{} + 1 = -738 - 6 \sqrt M = -6 \times (123 + \sqrt M), $$ 
for which $\pgcd\,(123,M) = 41$.

\medskip
(ii) For $M= 141245 = 5 \cdot 13 \cdot 41 \cdot 53$, $\varepsilon_K^{} = 
49609+132 \sqrt M$ and:
$$\varepsilon_K^{} + 1 = 49610+132 \sqrt M = 22 \times (2255 + 6\sqrt M), $$
where $22 = \pgcd\,(49610,132)$ and $\pgcd\,(2255,M)=5 \cdot 41$ giving the
principality of ${\mathfrak q}_{5} {\mathfrak q}_{41}$, 
and the principality of ${\mathfrak q}_{13} {\mathfrak q}_{53}$, also obtained from
 $-\varepsilon_K^{} + 1= -12 \times (4134 + 11 \sqrt M)$ and $\pgcd\,(4134,M) = 13 \cdot 53$.

\medskip
(iii) For $M=999826 = 2 \cdot 41 \cdot 89 \cdot 137$, $\varepsilon_K^{} + 1$ is given by:
\ft\begin{equation*}
\begin{aligned}
&11109636935777158836160759499956087745610931184259730878643242570969499893 \\
&0608609351188823863817034706422630544237192750927410464023060264033743426 \\
+&111106036003421265074388547121779710827974912909282096975471217501055084 \\
&481117390502436801341332286005566466631729812289759396153149523058885768 \sqrt M
\end{aligned}
\end{equation*}\ns

\noindent
with the $\pgcd$ of the two coefficients equal to:
\ft\begin{equation*}
\begin{aligned}
& 21082286734619551653000708969094423248037542899079230940776043942241398 \\
=& 2\times 3 \times 43 \times 11210269457991049  \times 7289235104943832975100088612482411236091543240227619 ,
\end{aligned}
\end{equation*}\ns

\noindent
giving the integer $A + B \sqrt M \in \ZK$ equal to:
\ft\begin{equation*}
\begin{aligned}
& 5269654604181931962753271711433450204598096091653697788648087227648861999187 \\
+ & 5270113123970195868835491287611281655343354587474034791159597224413298316 \sqrt M
\end{aligned}
\end{equation*}\ns

\noindent
then $\pgcd\,(A,M) = 41\times 89 \times 137$, giving the principality of:
\ft\begin{equation*}
\begin{aligned}
\hbox{\ns${\mathfrak q}_{41} {\mathfrak q}_{89} {\mathfrak q}_{137}$\ft} =  
&(5269654604181931962753271711433450204598096091653697788648087227648861999187 \\
+ & 5270113123970195868835491287611281655343354587474034791159597224413298316\sqrt M)
\end{aligned}
\end{equation*}\ns

\noindent
or simply that of ${\mathfrak q}_{2}$.

\begin{theorem} \label{maincriterion}
Let $M \geq 2$ be a square-free integer and put $K = \Q(\sqrt M)$; we assume that 
$-1 \in \Norm(K^\times)$. Let $\varepsilon_K^{} = a+b\sqrt M$  ($a, b \in \Z$ or 
$\frac{1}{2}\Z$) be the fundamental unit of~$K$. We consider the following
integers $A + B\sqrt M$, $A' + B'\sqrt M \in \ZK$, $A, B, A', B' \in \Z$ or $\frac{1}{2}\Z$, 
where the $\pgcd$ function must be understood in $\ZK$, giving for instance, 
$\pgcd\,(\frac{1}{2}U_0, \frac{1}{2}V_0) =  \pgcd\,(U_0,V_0)$ for $U_0$ 
and $V_0$ odd; in other words one may see $g$ and $g'$ below as the maximal 
rational integer factors of the quadratic integers:\,\footnote{\,The PARI/GP $\pgcd$ function 
gives instead $\pgcd\,(\frac{1}{2}U_0, \frac{1}{2}V_0) = \frac{1}{2} \pgcd\,(U_0,V_0)$; 
this gap only occurs when $M$ is odd, in which case this does not matter for the 
computation of the odd integers $m$ and $m'$.}
\begin{equation*}
\begin{aligned}
\varepsilon_K^{} + 1 = a+1+b\sqrt M & =: g \,(A + B\sqrt M),\ \ g := \pgcd\,(a+1,b), \\
\varepsilon_K^{} - 1 = a-1+b\sqrt M&  =: g' (A' + B'\sqrt M), \ \ g' := \pgcd\,(a-1,b).
\end{aligned}
\end{equation*}
 
 \noindent
Let $m := \pgcd\,(A, M)$, $n = \frac{M}{m}$, $C = \frac{A}{m}$, $m' := \pgcd\,(A', M)$,
$n' = \frac{M}{m'}$, $C' = \frac{A'}{m'}$.

\medskip
(i) If $M$ is odd, $\varepsilon_K^{}$ is of norm $\Sgn=1$ if and only if 
$(m, m') \ne (1, 1)$. 

\smallskip
\quad \ If $M$ is even, $\varepsilon_K^{}$ is of norm $\Sgn=1$ if and only if,
either $m > 2$ or else $m' > 2$.

\smallskip\noindent
In other words, the characterization of $\Sgn = -1$ becomes:

\smallskip
\quad If $M$ is odd, $\varepsilon_K^{}$ is of norm $\Sgn=-1$ if and only if $m=m'=1$. 

\smallskip
\quad If $M$ is even, $\varepsilon_K^{}$ is of norm $\Sgn=-1$ if and only if $m=m'=2$.

\smallskip
(ii) If the above conditions giving $\Sgn=1$ hold, then 
$\prod_{q \mid m} {\mathfrak q} = (\alpha)$ and 
$\prod_{q \mid n} {\mathfrak q} = (\beta)$, with 
$\alpha = C\, m  + B \sqrt M$ and $\beta = B\, n  + C \sqrt M$ in $\ZK$.
\end{theorem}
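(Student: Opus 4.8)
The strategy is to reduce Theorem \ref{maincriterion} to Theorem \ref{chevalley} by making the correspondence between the abstract relations of principality and the explicit quadratic integers fully concrete. By Theorem \ref{chevalley}\,(i), $\Sgn = -1$ is equivalent to the absence of a non-canonical relation of principality between the ramified primes, and by part (ii) of that theorem, when $\Sgn = 1$ the non-canonical relations $\mathfrak{m} = (\alpha)$ and $\mathfrak{n} = (\beta)$ are obtained by stripping the maximal rational integer factors $g$ and $g'$ off $\varepsilon_K^{} + 1$ and $\varepsilon_K^{} - 1$, so that $A + B\sqrt M$ and $A' + B'\sqrt M$ are precisely (up to a unit, hence up to sign and up to multiplication by $\varepsilon_K^{}$) the generators $\alpha$ and $\beta$. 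So the content of Theorem \ref{maincriterion} is: the support of the principal ideal $(A + B\sqrt M)$ among ramified primes is read off as $m = \pgcd(A, M)$, and likewise for $A'$.

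First I would verify the factorization formula in part (ii). For a ramified prime $q \mid M$, the prime $\mathfrak q$ above it satisfies $\mathfrak q = (q, \sqrt M)$ and $\mathfrak q^2 = (q)$; so a quadratic integer $\gamma = x + y\sqrt M$ lies in $\mathfrak q$ iff $q \mid x$ (using $q \mid M$), and the exact power of $\mathfrak q$ dividing $(\gamma)$, as a principal ideal, is governed by the $q$-adic valuation of $\Norm(\gamma) = x^2 - My^2$ together with divisibility of $x$ by $q$. Concretely: if $q \mid x$ but $q \nmid y$, then $\mathfrak q \,\|\, (\gamma)$ exactly once (since $q^2 \nmid My^2$ as $M$ is square-free), while if $q \nmid x$ then $\mathfrak q \nmid (\gamma)$. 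Hence the product of ramified primes exactly dividing $(A + B\sqrt M)$ is $\prod_{q \mid \pgcd(A,M),\ q \nmid B} \mathfrak q$. Since $A + B\sqrt M$ has no rational integer factor by construction (it is $\frac{1}{g}(\varepsilon_K^{}+1)$ with $g$ maximal), one checks $\pgcd(A, B) = 1$ in the relevant sense, so the condition $q \nmid B$ is automatic for $q \mid \pgcd(A, M)$, giving support exactly $m = \pgcd(A, M)$. That $\prod_{q\mid m}\mathfrak q = (A + B\sqrt M)$ rather than merely dividing it then follows because $\Norm(A + B\sqrt M) = \pm m n' \cdot(\text{unit norm contribution})$ must match $\Norm$ of the ideal; writing $\alpha = Cm + B\sqrt M$ with $C = A/m$ and computing $\Norm(\alpha) = C^2 m^2 - M B^2 = m(C^2 m - n B^2)$ confirms $\mathfrak m \mid (\alpha)$ with the complementary rational part being a unit, i.e.\ $(\alpha) = \mathfrak m$; the symmetric computation with $\varepsilon_K^{} - 1$ gives $\beta = Bn + C\sqrt M$ and $\mathfrak n = (\beta)$, and $\mathfrak m \mathfrak n = (\sqrt M)$ pins down the signs/normalization.

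For part (i), I would argue as follows. If $\Sgn = 1$, Theorem \ref{chevalley}\,(ii) furnishes a genuine non-canonical relation with support $m \mid M$, $m \ne 1, M$, obtained exactly via $\varepsilon_K^{} \pm 1$; then by the valuation computation above $\pgcd(A, M) = m \notin \{1\}$ when $M$ is odd (and, in the even case, one must track the prime $2$ separately — $2 \mid M$ ramifies, $\mathfrak q_2^2 = (2)$, and since the canonical generator $\sqrt M$ has $\mathfrak q_2 \mid (\sqrt M)$, the support $m$ of the non-canonical relation either contains $2$ or not, and the analysis shows $\pgcd(A,M) \in \{2 m_{\mathrm{odd}}, m_{\mathrm{odd}}\}$ accordingly, so exactly one of $m, m'$ exceeds $2$). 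Conversely, if $m = m' = 1$ (odd case) or $m = m' = 2$ (even case), then neither $(A + B\sqrt M)$ nor $(A' + B'\sqrt M)$ carries any non-canonical support, so the only relations of principality produced are the canonical ones; since Theorem \ref{chevalley}\,(i) says a non-canonical relation \emph{must} exist when $\Sgn = 1$ and \emph{must} be detectable via $\varepsilon_K^{} \pm 1$ (by part (ii) of that theorem, the generator $\alpha$ satisfies $\alpha^{1-\sigma} = \pm\varepsilon_K^{}$, forcing $\varepsilon_K^{} + 1$ or $\varepsilon_K^{} - 1$ to be $g\alpha$ for suitable maximal $g$), we conclude $\Sgn = -1$.

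The main obstacle I anticipate is the bookkeeping around the prime $2$ in the even case and the half-integer coefficients: one must be careful that the $\pgcd$ taken in $\ZK$ (as emphasized in the statement, with the footnote about the PARI convention) correctly extracts the maximal \emph{rational} factor, and that the normalization $\mathfrak m \mathfrak n = (\sqrt M)$ — not $(2\sqrt M)$ or similar — holds; this is where the value $m = 2$ (rather than $m = 1$) in the even case enters, since $\mathfrak q_2 \mid (\sqrt M)$ means $2$ always lies in the support of the \emph{canonical} relation, so the threshold for "non-canonical" shifts from $1$ to $2$. The rest is the residue/valuation computation, which is routine given that $M$ is square-free.
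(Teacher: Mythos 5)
Your forward direction (together with the explicit valuation computation identifying the ramified support of $(A+B\sqrt M)$ with $\pgcd(A,M)$, a point the paper leaves largely implicit) is sound, and part (ii) comes out of it correctly. But your proof of the equivalence in (i) has a genuine gap: you only ever establish one implication. The paragraph ``If $\Sgn=1$\,\dots'' proves $\Sgn=1\Rightarrow (m,m')\ne(1,1)$ (resp.\ $m>2$ or $m'>2$ in the even case), and the paragraph ``Conversely, if $m=m'=1$ \dots\ we conclude $\Sgn=-1$'' is exactly the contrapositive of that same implication: inside it you again appeal to Theorem \ref{chevalley}\,(ii), which is only available under the hypothesis $\Sgn=1$. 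What is missing is the true converse, namely that $(m,m')\ne(1,1)$ (odd case), resp.\ $m>2$ or $m'>2$ (even case), forces $\Sgn=1$ — equivalently, that $\Sgn=-1$ forces $m=m'=1$, resp.\ $m,m'\le 2$. This cannot be read off from Theorem \ref{chevalley} as you use it: when $\Sgn=-1$ the element $(\varepsilon_K^{}+1)^{1-\sigma}$ is not a unit, the ideal $(A+B\sqrt M)$ is not invariant, and nothing a priori prevents $\pgcd(A,M)$ from containing an odd ramified prime without this producing any principality relation among ramified primes. The paper closes this direction with a short congruence argument you would need to supply: if an odd prime $q$ divides $m$, then $\varepsilon_K^{}\equiv-1\pmod{\mathfrak q}$, hence also $\varepsilon_K^\sigma\equiv-1\pmod{\mathfrak q}$ (as $\mathfrak q^\sigma=\mathfrak q$), so $\Sgn=\varepsilon_K^{1+\sigma}\equiv1\pmod q$, forcing $\Sgn=1$ since $q>2$; an equivalent elementary route uses $\Norm(\varepsilon_K^{}+1)=\Trace(\varepsilon_K^{})+1+\Sgn$.

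Two further points in the even case. The stated equivalence ``$\Sgn=-1$ iff $m=m'=2$'' also needs the fact that $m,m'\ne1$ whenever $M$ is even and $\Sgn=-1$; the paper gets this by showing $a,b\in\Z$ are both odd (from $a^2-Mb^2=-1$), so $\mathfrak q_2$ divides $(\varepsilon_K^{}\pm1)$ while $g,g'$ are odd, whence $2\mid A$ and $2\mid A'$. You only gesture at this (``the threshold shifts from $1$ to $2$''), so it should be made explicit. Finally, your claim that in the even case with $\Sgn=1$ ``exactly one of $m,m'$ exceeds $2$'' is incorrect — both may exceed $2$ (e.g.\ $M=410$, whose relations have supports $41$ and $10$); what is true, and all that is needed, is that at least one does, since $m$ and $m'=n$ have opposite parity and $mm'=M>4$ there.
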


\begin{proof}
We do not consider the particular case $r=1$ giving $\Sgn = -1$ and $\CH_K^G = \CH_K^\ram = 1$;
we prove the characterization of $\Sgn = 1$ of the statement.

\smallskip
\quad $\bullet$ Let's assume $\Sgn = 1$:

\smallskip
Recall, from Theorem \ref{chevalley}, that since $(\varepsilon_K^{}+1)^{1-\sigma} 
= \varepsilon_K^{}$, the ideal $(\varepsilon_K^{}+1)$ gives, after elimination of its 
maximal rational factor, {\it the unique non-canonical relation of principality} 
${\mathfrak m} = (\alpha)$, of support $m \mid M$, $m \ne 1, M$, 
between the ramified primes. 

\smallskip
Since $(\sqrt M \cdot (\varepsilon_K^{}+1))^{1-\sigma} = -\varepsilon_K^{}$ 
and $(\varepsilon_K^{}-1)^{1-\sigma} = -\varepsilon_K^{}$, the ideals
$(\sqrt M \cdot (\varepsilon_K^{}+1))$ (leading to ${\mathfrak n} = (\beta)$ of 
support $n =  \frac{M}{m}$) and $(\varepsilon_K^{}-1)$ (leading to 
${\mathfrak m}' = (\alpha')$ of support $m'$) yield {\it the same non-canonical 
relations} of supports $n$ and $m'$, so that $n = m'$, with $m$, $n$ distinct 
from $1$, $M$.

\smallskip
One obtains the suitable condition $m \ne 1$ and $m'=n \ne 1$ when $M$ is odd; 
if $M$ is even, the integers $m$, $m' = n$ are of different parity, distinct from $1$ 
and $M$ and such that $m m' = M >2$; so, if we assume, for instance, $m \leq 2$ 
(hence $m=2$), this implies $m' = \frac{M}{m} >2$.

\smallskip
\quad $\bullet$ Reciprocal. We assume $m \ne 1$ (resp.
$m>2$ or $m'>2$) if $M$ is odd (resp. even):

\smallskip
In the odd case, $\varepsilon_K^{} \equiv -1 \pmod {{\mathfrak m}}$, ${\mathfrak m}$ of 
support $m$, implies $\varepsilon_K^\sigma \equiv -1 \pmod {\mathfrak m}$, whence 
$\Sgn = \varepsilon_K^{1+\sigma} \equiv 1 \pmod m$ thus $\Sgn = 1$ 
 since $m \ne 1$ implies $m > 2$ in the odd case.

\smallskip
In the even case, since $m > 2$ or  $m'>2$, the same conclusion holds,
using one of the congruence $\varepsilon_K^{} \equiv -1 \pmod {{\mathfrak m}}$
or $\varepsilon_K^{} \equiv 1 \pmod {{\mathfrak m}'}$
(note that in the reciprocal one does not know if $m' = n$).

\smallskip
\quad $\bullet$ The characterization of $\Sgn = -1$ is of course obvious, 
but the precise statement is crucial for statistical interpretation, especially in the 
even case:

\smallskip
\qquad -- In the odd case, one gets $m=m'=1$. 

\smallskip
\qquad -- In the even case, one gets 
$m \leq 2 \ \, \& \ \, m' \leq 2$, but we have $m \ne 1$ and $m' \ne 1$;
indeed, let $\varepsilon_K^{} = a+b \sqrt M$, $a,b \in \Z$, where 
$a^2 - M b^2 = -1$ implies $a$ and $b$ odd (the case of $a$ is obvious,
so $a^2 \equiv 1 \pmod 8$, then $b$ even would imply $1 \equiv -1 \pmod 8$).
Since $\varepsilon_K^{} \equiv 1 \pmod {{\mathfrak q}_2}$ with $b$ odd, 
necessarily $\varepsilon_K^{} \not\equiv 1 \pmod 2$ and
$(\varepsilon_K^{} + 1) = {\mathfrak q}_2 {\mathfrak a}$, 
where ${\mathfrak a}$ is an odd ideal and, except for $M=2$, one 
gets ${\mathfrak a}\ne 1$, then $(\varepsilon_K^\sigma + 1) = 
{\mathfrak q}_2 {\mathfrak a}^\sigma$, whence $(\varepsilon_K^{} - 1) 
= {\mathfrak q}_2 {\mathfrak a}^\sigma$. Then the integers $g = \pgcd\,(a+1,b)$
and $g' = \pgcd\,(a-1,b)$ are odd, which gives $A \equiv A' \equiv 0 \pmod 2$,
hence $m=m'=2$ which is also equivalent to $b$ odd.

\smallskip
\quad $\bullet$ The last claim is immediate since $(A + B\sqrt M) = 
(C\,m + B\sqrt M)$ is invariant by $\sigma$, without any rational factor, 
which leads to $(\sqrt M\,(C\,m + B\sqrt M)) = (C\, m\sqrt M + B\,M) = 
m\,(B + C \sqrt M)$ giving rise to $(B + C \sqrt M)$ without any rational factor.
\end{proof}

\begin{remark}\label{22}
{\rm 
(i) In the case $M$ even with $\Sgn = -1$, the ideals 
$(\varepsilon_K^{} \pm 1)$ do not define invariant classes (except that of 
${\mathfrak q}_2$ in $\Q(\sqrt 2)$ since $(\varepsilon_2 + 1) =
(2+\sqrt 2) = \sqrt 2 \cdot \varepsilon_2$ and $(\varepsilon_2 - 1) = (\sqrt 2)$;
this comes from the fact that it is the unique even case where $\CH_K^G = 1$).

\smallskip
But ${\mathfrak q}_2$ is not principal since there is no non-canonical relation 
of principality in the case $\Sgn = -1$ (e.g., $M$ $\in$ $\{$$10$, $26$, $58$, $74$, 
$82$, $106$, $122$, $130$, $170$$\}$).

\smallskip
To get illustrations for $M$ even, $\Sgn = -1$, the following program computes 
the class group of $K$ (in ${\sf HK=K.clgp}$), then (in ${\sf R}$) the 
components of the (non-trivial) class of ${\mathfrak q}_2$:

\smallskip
\ft\begin{verbatim}
{forstep(M=2,10^6,2,if(core(M)!=M,next);K=bnfinit(x^2-M,1);
S=norm(K.fu[1]);if(S==1,next);q2=component(idealfactor(K,2),1)[1];
R=bnfisprincipal(K,q2)[1];print("M=",M," R=",R," HK=",K.clgp))}

M=10    R=[1]     HK=[2,[2]]        M=199810  R=[0,0,1,1] HK=[128,[16,2,2,2]]
M=82    R=[2]     HK=[4,[4]]        M=519514  R=[32,1]    HK=[128,[64,2]]
M=130   R=[0,1]   HK=[4,[2,2]]      M=613090  R=[32,0,1]  HK=[256,[64,2,2]]
M=226   R=[4]     HK=[8,[8]]        M=690562  R=[16,2]    HK=[128,[32,4]]
M=442   R=[0,1]   HK=[8,[4,2]]      M=700570  R=[8,1,0,1] HK=[128,[16,2,2,2]]
M=2210  R=[1,1,1] HK=[8,[2,2,2]]    M=720802  R=[16,0]    HK=[128,[32,4]]
M=3026  R=[2,2]   HK=[16,[4,4]]     M=776866  R=[16,0,0]  HK=[128,[32,2,2]]
\end{verbatim}\ns

But we have the cases $M$ even and $\Sgn = 1$ (e.g., $M$ $\in$ 
$\{$$34$, $146$, $178$, $194$, $386$, $410$, $466$, $482$,
$514$, $562$$\}$), where relations ${\mathfrak q}_2$ principal are 
more frequent when $r$ is small (e.g., $M$ $\in$ $\{$$34$, $146$,
$178$, $194$, $386$, $466$, $482$$\}$; for $M=410$, the relation 
is given by $m=41$, $n=10$).

\smallskip
(ii) Consider the field $L= K(\sqrt 2)$ for $M$ even, $M=:2M'$; the extension 
$L/K$ is unramified since $\Q(\sqrt{M'})/\Q$ is not ramified at $2$. 
The extension of ${\mathfrak q}_2$ in $L$ becomes the principal ideal $(\sqrt 2)$
whatever the decomposition of $2$ in $\Q(\sqrt{M'})/\Q$.
Meanwhile, if $2$ is inert in this extension (i.e., $M' \equiv 5 \pmod 8$),  
${\mathfrak q}_2$ can not be principal in $K$, otherwise, if ${\mathfrak q}_2 
= (\alpha)$, then, in $L$, $\alpha = \eta \sqrt 2$, where $\eta$ is a unit of $L$, 
and by unicity of radicals (up to $K^\times$) this is absurd and the class
of ${\mathfrak q}_2$ capitulates in $L$. 

\smallskip
So either $m=2 \ \& \ m'=2$, with ${\mathfrak q}_2$ non-principal and $\Sgn = - 1$ (e.g., 
$M$ $\in$ $\{$$10$, $26$, $58$, $74$, $82$, $106$, $122$$\}$ showing that 
reciprocal does not hold since $2$ may split in $\Q(\sqrt{M'})/\Q$, as for $M=82$), 
or else $m=2 \ \& \ m'>2$, giving the principality of ${\mathfrak q}_2$ and $\Sgn = 1$
(e.g., all the previous cases $M$ $\in$ $\{$$34$, $146$, $178$, $194$, 
$386$, $466$, $482$$\}$).}
\end{remark}

\subsection{Computation of \texorpdfstring{$\Sgn$}{Lg}
by means of the {\bf gcd} criterion}\label{program}

The following PARI/GP program computes (if any) the non-canonical relation of 
principality ${\mathfrak m}$ distinct from $(1)$, $(\sqrt M)$ between the ramified 
primes, {\it only by means of the previous result 
on the coefficients of the fundamental unit} (Theorem \ref{maincriterion}), and 
deduces the norm $S$ without calculating it; when there are no non-canonical 
relations (whence $\Sgn = -1$), the corresponding data is empty.
One has only to give the bound ${\sf BD}$ of the discriminants $D$ whose
odd prime divisors are congruent to $1$ modulo $4$. The counters ${\sf CD}$, 
${\sf Cm}$, ${\sf Cp}$, ${\sf C22}$, enumerate the sets $\CD$, $\CD^-$, 
$\CD^+$ and the set $\CD_{22}^-$ of cases where $m=m'=2$ (equivalent to $M$ 
even and $\Sgn = -1$), respectively; for short we do not write the cases where $m=1$, 
giving $\Sgn = -1$ ($M \in$ $\{$$5$, $13$, $17$, $29$, $37,\,\ldots$\}: 

\smallskip
\ft\begin{verbatim}
MAIN PROGRAM COMPUTING S VIA THE RELATIONS OF PRINCIPALITY
{BD=10^7;CD=0;Cm=0;Cp=0;C22=0;for(D=5,BD,v=valuation(D,2);if(v!=0 & v!=3,next);
i0=1;M=D;if(v==3,M=D/4;i0=2);if(core(M)!=M,next);if(Mod(M,4)==3,next);
r=omega(M);f=factor(M);ellM=component(f,1);for(i=i0,r,ell=ellM[i];
if(Mod(ell,4)==3,next(2)));CD=CD+1;res=lift(Mod(M,2));e=quadunit(D);
Y=component(e,3)/(res+1);X=component(e,2)+res*Y;g=gcd(X+1,Y);m=gcd((X+1)/g,M);
if(m==1,S=-1;print("D=",D," M=",M," relations: "," ",","," ",", S=",S);Cm=Cm+1);
if(m>2,S=1;print("D=",D," M=",M," relations: ",m,",",M/m,", S=",S);Cp=Cp+1);
if(m==2,gp=gcd(X-1,Y);mp=gcd((X-1)/gp,M);
if(mp>2,S=1;print("D=",D," M=",M," relations: ",m,",",mp,", S=",S);Cp=Cp+1);
if(mp==2,S=-1;print("D=",D," M=",M," relations: "," ",","," ",", S=",S,","," m=mp=2");
Cm=Cm+1;C22=C22+1)));
print("CD=",CD," Cm=",Cm," Cp=",Cp," C22=",C22);
print("Cm/CD=",Cm/CD+0.0," Cp/CD=",Cp/CD+0.0);
print("C22/CD=",C22/CD+0.0," C22/Cm=",C22/Cm+0.0)}

D=8 M=2       relations:   , ,   S=-1, m=mp=2
D=40 M=10     relations:   , ,   S=-1, m=mp=2
D=104 M=26    relations:   , ,   S=-1, m=mp=2
D=136 M=34    relations: 2,17,   S=1
D=205 M=205   relations: 5,41,   S=1
D=221 M=221   relations: 17,13,  S=1
D=232 M=58    relations:   , ,   S=-1, m=mp=2
D=296 M=74    relations:   , ,   S=-1, m=mp=2
D=305 M=305   relations: 5,61,   S=1
D=328 M=82    relations:   , ,   S=-1, m=mp=2
D=377 M=377   relations: 13,29,  S=1
D=424 M=106   relations:   , ,   S=-1, m=mp=2
D=488 M=122   relations:   , ,   S=-1, m=mp=2
D=505 M=505   relations: 5,101,  S=1
D=520 M=130   relations:   , ,   S=-1, m=mp=2
D=545 M=545   relations: 109,5,  S=1
D=584 M=146   relations: 73,2,   S=1
D=680 M=170   relations:   , ,   S=-1, m=mp=2
D=689 M=689   relations: 53,13,  S=1
D=712 M=178   relations: 89,2,   S=1
D=745 M=745   relations: 149,5,  S=1
D=776 M=194   relations: 2,97,   S=1
D=793 M=793   relations: 13,61,  S=1
D=808 M=202   relations:   , ,   S=-1, m=mp=2
D=872 M=218   relations:   , ,   S=-1, m=mp=2
D=904 M=226   relations:   , ,   S=-1, m=mp=2
D=905 M=905   relations: 181,5,  S=1
D=1096 M=274  relations:   , ,   S=-1, m=mp=2
D=1160 M=290  relations:   , ,   S=-1, m=mp=2
D=1192 M=298  relations:   , ,   S=-1, m=mp=2
D=1205 M=1205 relations: 5,241,  S=1
D=1256 M=314  relations:   , ,   S=-1, m=mp=2
D=1345 M=1345 relations: 269,5,  S=1
D=1384 M=346  relations:   , ,   S=-1, m=mp=2
D=1405 M=1405 relations: 5,281,  S=1
D=1448 M=362  relations:   , ,   S=-1, m=mp=2
D=1469 M=1469 relations: 13,113, S=1
D=1480 M=370  relations:   , ,   S=-1, m=mp=2
D=1513 M=1513 relations: 17,89,  S=1
D=1517 M=1517 relations: 41,37,  S=1
D=1537 M=1537 relations: 29,53,  S=1
D=1544 M=386  relations: 2,193,  S=1
D=1576 M=394  relations:   , ,   S=-1, m=mp=2
D=1640 M=410  relations: 41,10,  S=1
D=1717 M=1717 relations: 17,101, S=1
D=1768 M=442  relations:   , ,   S=-1, m=mp=2
D=1832 M=458  relations:   , ,   S=-1, m=mp=2
D=1864 M=466  relations: 233,2,  S=1
D=1885 M=1885 relations: 29,65,  S=1
D=1928 M=482  relations: 2,241,  S=1
D=1945 M=1945 relations: 389,5,  S=1
D=1961 M=1961 relations: 37,53,  S=1
(...)
D=9999592 M=2499898 relations:   , ,      S=-1, m=mp=2
D=9999617 M=9999617 relations: 21881,457, S=1
D=9999665 M=9999665 relations: 153841,65, S=1
D=9999688 M=2499922 relations:   , ,      S=-1, m=mp=2
D=9999709 M=9999709 relations: 113,88493, S=1
D=9999784 M=2499946 relations: 353,7082,  S=1
D=9999797 M=9999797 relations: 34129,293, S=1
D=9999821 M=9999821 relations: 1381,7241, S=1
D=9999845 M=9999845 relations: 1999969,5, S=1
D=9999944 M=2499986 relations: 73529,34,  S=1
D=9999953 M=9999953 relations: 270269,37, S=1
D=9999977 M=9999977 relations: 277,36101, S=1
\end{verbatim}\ns

\ft\begin{verbatim}
BD=10^7:
CD=866200 Cm=691947  Cp=174253  C22=70295
Cm/CD =0.79883052    Cp/CD =0.20116947 
C22/CD=0.08115331    C22/Cm=0.10159015
\end{verbatim}\ns

\section{Remarks on density questions}\label{densities}

A classical principle in number theory is to examine some deep invariants 
(as class groups, units, etc.) of families of fields (assuming, in general, 
that some parameters are fixed, as for instance, the Galois group, the 
signature, etc.), classified regarding the discriminants. The analytic reason
is that the order of magnitude of $\ffrac{h_K\cdot R_K}{\sqrt{D_K}}$ is controlled 
by suitable $\zeta$-functions and then, if the discriminant $D_K$ increase in the 
family, the class number $h_K$ and/or the regulator $R_K$ increase or,
at least, have a larger complexity. This case represents the reality 
quite well in a global context, but it can be questioned for $p$-adic
framework or non semi-simple Galois setting. We intend to give some
remarks in this direction about the norm of $\Sgn = \Norm(\varepsilon_K^{})$
linked significantly to the $2$-class group.

\subsection{Classical approach of the density}
We will describe the case of $\Sgn = -1$, using the following definitions:

\begin{definitions}\label{def}
(i) Denote by $\CM$ (resp. $\CD$) the set of all Kummer radicals
(resp. of all Discriminants), such that $-1 \in \Norm(K^\times)$. 
We have, from a result of Rieger \cite{Rie}:
$$\order \CM_{\leq \BX} \approx \frac{3}{2\pi}\,\prd_{p \equiv 1 \,{\rm mod}\, 4}
\!\!\Big(1- \frac{1}{p^2}\Big)^{\!\frac{1}{2}}\!\! \frac{\BX}{\sqrt{\log(\BX)}} 
\approx \frac{0.464592...  \,\BX}{\sqrt{\log(\BX)}} , \ \ 
 \ \ \order \CD_{\leq \BX} \approx \frac{3}{4}\,\order \CM_{\leq \BX}. $$

(ii) Denote by $\CM^- \subset \CM$ (resp. $\CD^- \subset \CD$) the subset of all
Kummer radicals (resp. of all Discriminants), such that $\Sgn := \Norm(\varepsilon_K^{})=-1$
and put:
$$\Delta_\disc^- := \lim_{\BX \to \infty} 
\frac{\CM^-_{\leq \BX}}{\CM_{\leq \BX}} = \lim_{\BX \to \infty} 
\frac{\CD^-_{\leq \BX}}{\CD_{\leq \BX}}. $$
\end{definitions}

Many heuristics have given $\Delta_\disc^-$ around $0.5$ or $0.6$.
Our characterization would give a density around $\frac{3}{2} \cdot \Big(\frac{6}
{\pi^2}\Big)^2 \approx 0.554363041753... $, but without a precise classification
by means of ascending discriminants or of number of ramified primes, a context 
using structure of the $2$-class group, whence quadratic symbols, R\'edei's 
matrices, Furuta symbols \cite{Fu}, etc. These classical principles consist in using the 
filtration of the $2$-class group following, e.g., the theoretical algorithm described in 
whole generality in \cite{Gra3}, with the fixed point formulas generalizing that of 
Chevalley--Herbrand (say for the quadratic case): 
$$\order (\CH_K^{i+1}/\CH_K^i) = 
\ffrac{2^{r-1}}{(\Lambda_K^i : \Lambda_K^i \cap \Norm (K^\times))}, \ \ i \geq 0, $$
where $\CH_K^i = {\rm Ker}(1-\sigma)^i$, the $\Lambda_K^i$'s, with $\Lambda_K^i 
\subseteq \Lambda_K^{i+1}$ for all $i$, defining a sequence of suitable 
subgroups of $\Q^\times$; more precisely, $\Lambda_K^0 = \langle-1\rangle$,
$\Lambda_K^1 = \langle -1, q_1, \cdots, q_r \rangle$, the next $\Lambda_K^{i+1}$'s
introducing ``random'' numbers $b = \Norm( {\mathfrak b})$ from identities of the form 
${\mathfrak a} = (y) {\mathfrak b}^{1-\sigma}$, when $x \in \Lambda_K^i$
is such that $(x) = \Norm({\mathfrak a}) = \Norm(y)$, $cl({\mathfrak a})
\in \CH_K^i$, $y \in K^\times$. 

\smallskip
Then the index $(\Lambda_K^i : \Lambda_K^i 
\cap \Norm (K^\times))$ being nothing else than  
$\order \rho_K^i(\Lambda_K^i)$, where $\rho_K^i$ is the $r$-uple of 
Hasse's norm residue symbols giving rise to generalized ``R\'edei matrices
of quadratic residue symbols'', or more simply, random maps $\F_2^{r_i} \to 
\F_2^{r-1}$ (product formula of the symbols), $r_i = \dim_{\F_2} 
\big(\Lambda_K^i/(\Lambda_K^i)^2 \big)$.

\smallskip
Similar viewpoints are linking the norm of $\varepsilon_K^{}$ to the structure of the 
$2$-class group in the restricted sense (see many practical examples in \cite{Gra2}). 

\smallskip
This proportion of $\CD^-$ inside $\CD$ (\cite[\S\,1, p.\,122]{St}, \cite[(1.3), p.\,1328]{BoSt}), 
was conjectured by Stevenhagen to be:
$$P = 1-\prd_{k \geq 0} \Big(1-\frac{1}{2^{1+2k}}\Big) \approx  0.5805775582\,\ldots $$

We refer to \cite{St}, then to \cite{FK2,KoPa2} for history and bibliographical comments about 
the norm of the fundamental unit of $\Q(\sqrt M)$ and for his heuristic based 
on the properties of densities $P_t$, corresponding to discriminants having $t$ distinct 
prime factors. 

\smallskip
These results involving the $2$-class groups structures allow informations about 
$\Delta_\disc^-$ and especially the determination of lower and upper bounds. For this 
aspect, we refer to the Chan--Koymans--Milovic--Pagano paper \cite{CKMP} 
who had proven that $\Delta_\disc^-$ is larger than $0.538220$, improving Fouvry--Kl\"uners 
results \cite{FK1,FK2} saying that $\Delta_\disc^-$ lies between $0.524275$ and 
$0.6666666$, which gave $0.538220 \leq \Delta_\disc^- \leq 0.6666666$.

For $\BX = 10^8$, a PARI \cite{P} calculation  gives the experimental density
$\ds \frac{\order \CD_{\leq \BX}^-}{\order\CD_{\leq \BX}} \approx 
0.787255$. Moreover, it is well known that such partial densities
decrease as $\BX$ increases; in other words, computer approaches 
are misleading as explained in \cite{St}. More precisely the arithmetic
function $\omega(x)$ giving the number of prime divisors of $x$ fulfills
the optimal upper bound $\omega(x) \leq (1+o(1))\,\ffrac{\log(x)}{\log(\log(x))}$
\cite[I.5.3]{Ten}, so that large Kummer radicals have ``more prime divisors'',
whence more important probability to get $\Sgn = -1$.

\smallskip
In the present paper, we do not use these classical ways, so that the main
question is to understand how densities may be defined; let's give
the example of classification by ascending traces before giving that
of the gcd criterion.

\subsection{Classification by ascending traces of fundamental units} 
In another direction, let's apply the ``First Occurrence Process'' algorithm \cite[Section 4, 
Theorem 4.6]{Gra4} in the interval $[1,\BB]$, simultaneously for the two cases $s=-1$ 
and $s=1$ of the polynomials $m_s(t) = t^2 -4s$, under the condition $-1 \in 
\Norm(K^\times)$ (to be checked only for $s=1$). 

\smallskip
Recall that in the set $\CT_{\leq \BB}$ of traces $\Trace(\varepsilon_K^{}) \leq \BB$, 
$\CT^+_{\leq \BB}$, $\CT^-_{\leq \BB}$, denote the corresponding subsets of traces 
$t \leq \BB$ of fundamental units, of norm $s = -1$, $s=1$, respectively.

\smallskip
As $\BB \to \infty$, all units are represented in these lists, as shown with the following
PARI program. Indeed, this is clear with $t^2+4 = M r^2$ giving $\CT^-_{\leq \BB}$, 
and from \cite[Theorem 4.6 and Corollaries]{Gra4}, the set $\CT^+_{\leq \BB}$ deals with 
minimal traces $t$ for which $t^2-4 = M r^2$ gives the unit $\varepsilon_K^{}$ 
(if $\Sgn = 1$) or $\varepsilon_K^2$ (if $\Sgn = -1$), but the squares of units of 
norm $-1$ were obtained with $t^2+4$ for a smaller trace, so that the process 
eliminates this data in the final list. Finally the densities ${\sf dp, dm}$ are that
of units of norms ${\sf 1, -1}$, respectively.

\smallskip
\ft\begin{verbatim}
PROGRAM FOR DENSITY OF UNITS OF NORM 1 AND OF NORM -1
{B=10^6;Cm=0;Cp=0;LM=List;for(t=1,B,mtm=t^2+4;M=core(mtm);L=List([M,-1]);
listput(LM,vector(2,c,L[c]));mtp=t^2-4;if(mtp<=0,next);M=core(mtp);
r=omega(M);i0=1;if(Mod(M,2)==0,i0=2);f=factor(M);ellM=component(f,1);
T=1;for(i=i0,r,c=ellM[i];if(Mod(c,4)!=1,T=0;break));if(T==1,L=List([M,1]);
listput(LM,vector(2,c,L[c]))));VM=vecsort(vector(B,c,LM[c]),1,8);print(VM);
for(k=1,#VM,S=VM[k][2];if(S==1,Cp=Cp+1);if(S==-1,Cm=Cm+1));
print("#VM=",#VM," Cp=",Cp," Cm=",Cm," dp=",Cp/#VM+0.," dm=",Cm/#VM+0.)}
VM=
[[2,-1],[5,-1],[10,-1],[13,-1],[17,-1],[26,-1],[29,-1],[34,1],[37,-1],[41,-1],
[53,-1],[58,-1],[61,-1],[65,-1],[73,-1],[74,-1],[82,-1],[85,-1],[89,-1],[97,-1],
[101,-1],[106,-1],[109,-1],[113,-1],[122,-1],[130,-1],[137,-1],[145,-1],[146,1],
[149,-1],[157,-1],[170,-1],[173,-1],[178,1],[181,-1],[185,-1],[194,1],[197,-1],
[202,-1],[205,1],[218,-1],[221,1],[226,-1],[229,-1],[233,-1],[257,-1],[265,-1],
[269,-1],[274,-1],[277,-1],[290,-1],[293,-1],[298,-1],[305,1],[314,-1],[317,-1],
[346,-1],[349,-1],[353,-1],[362,-1],[365,-1],[370,-1],[373,-1],[377,1],[386,1],
[389,-1],[397,-1],[401,-1],[410,1],[421,-1],[442,-1],[445,-1],[458,-1],[461,-1],
[482,1],[485,-1],[493,-1],[505,1],[509,-1],[514,1],[530,-1],[533,-1],[538,-1],
[545,1],[554,-1],[557,-1],[565,-1],[577,-1],[610,-1],[613,-1],[626,-1],[629,-1],
[653,-1],[674,1],[677,-1],[685,-1],[689,1],[697, -1],[698,-1],[701,-1],[706,1],
(...)
[906781966997,1],[906781967005,-1],[906785776013,-1],[906789585029,-1],
[906793394053,-1],[906797203085,-1],[906804821173,-1],[906808630229,-1],
[906812439293,-1],[906816248365,-1],[906820057445,-1],[906823866533,-1],
[906827675629,-1],[906831484733,-1],[906835293845,-1],[906839102965,-1],
[906842912093,-1],[906846721229,-1]]
#VM=998781   Cp=46622   Cm=952159   dp=0.0466789015   dm=0.9533210984
\end{verbatim}\ns

\smallskip
For $\BB = 10^7$, the data becomes:

\ft\begin{verbatim}
#VM=9996335  Cp=400433  Cm=9595902  dp=0.0400579812  dm=0.9599420187
\end{verbatim}\ns

\smallskip
In that context, $\varepsilon_K^{} = a + b \sqrt M$ is written $\frac{1}{2}(t + r \sqrt M)$
and only units with too large traces are missing in the above finite list. Restricting to 
the density $\Delta_\tr^-$ of the set of units of norm $-1$ inside the set of units classified 
by ascending traces, one gets:
\begin{theorem}
We have $\Delta_\tr^- :=\ds \lim_{\BB \to \infty} \frac{\order \CT^-_{\leq \BB}}
{\order \CT_{\leq \BB}} = 1$.
\end{theorem}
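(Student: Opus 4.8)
The plan is to compare two counting functions as $\BB\to\infty$: the number $\order\CT^-_{\leq\BB}$ of squarefree-core values produced from $t^2+4$ with trace $t\leq\BB$, and the number $\order\CT^+_{\leq\BB}$ of \emph{new} squarefree-core values produced from $t^2-4$ (those not already seen from a smaller trace via $t^2+4$). Since $\CT_{\leq\BB}=\CT^-_{\leq\BB}\sqcup\CT^+_{\leq\BB}$, the claim $\Delta_\tr^-=1$ is equivalent to $\order\CT^+_{\leq\BB}=o(\order\CT^-_{\leq\BB})$.

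\medskip
First I would bound $\order\CT^-_{\leq\BB}$ from below. Each $t$ with $1\le t\le\BB$ contributes a radical $M=\mathrm{core}(t^2+4)$, and the map $t\mapsto M$ is finite-to-one in a controlled way: $M=\mathrm{core}(t^2+4)$ means $t^2+4=Mr^2$, i.e. $(t,r)$ is a solution of a Pell-type equation attached to $M$, so the number of $t\le\BB$ with a given core $M$ is $O(\log\BB)$ (the number of powers of the fundamental solution below $\BB$). Hence $\order\CT^-_{\leq\BB}\gg \BB/\log\BB$. This uses only elementary facts about Pell equations and the fact, recalled in the excerpt, that $t^2+4=Mr^2$ always yields a unit of norm $-1$ in $\Q(\sqrt M)$.

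\medskip
Next I would bound $\order\CT^+_{\leq\BB}$ from above. A value is counted in $\CT^+_{\leq\BB}$ only if it arises as $\mathrm{core}(t^2-4)$ with $t\le\BB$ \emph{and} all odd prime divisors of $M$ are $\equiv 1\pmod 4$ \emph{and} the corresponding field has $\Sgn=1$. By Theorem \ref{maincriterion}, $\Sgn=1$ forces a non-canonical principality relation between the ramified primes, which in turn (via Theorem \ref{chevalley}) forces $\CH_K^\ram$ to be nontrivial, hence the $2$-class group of $\Q(\sqrt M)$ is nontrivial; in particular $M=\mathrm{core}(t^2-4)=\mathrm{core}((t-2)(t+2))$ must factor with at least two prime divisors (counted with the $\equiv1\bmod 4$ constraint) so that $r\ge 2$ in the Chevalley--Herbrand count. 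This is a genuine constraint, but the cleanest route is cruder: simply bound $\order\CT^+_{\leq\BB}$ by the number of squarefree integers $M\le\BB^2$ that are cores of some $t^2-4\le\BB^2$ \emph{with norm $+1$ unit}; since a norm $+1$ fundamental unit means the Pell equation $t^2-Mr^2=4$ is the relevant one and $t^2-4=Mr^2$ again has only $O(\log\BB)$ solutions $t\le\BB$ per radical $M$, while each such $M$ must lie in the thin set $\CM^+$ of radicals with $\Sgn=1$. The key input is that $\order\CM^+_{\leq\BB^2}=o\big(\BB^2/\log\BB^2\big)$ relative to the relevant normalization — but actually the crudest bound suffices: $\order\CT^+_{\leq\BB}\le \#\{t\le\BB\}= \BB$ is useless, so instead use that for $\Sgn=1$ one needs $M\equiv 1\pmod 4$ or $M\equiv 2\pmod 4$ \emph{and} $M$ non-prime, and more importantly that $t^2-4$ being a core forces $t^2-4$ itself squarefree-up-to-square, giving at most $O(\sqrt{\BB}\,)$ essentially by counting $t$ with $t^2-4$ having bounded squarefree kernel is not automatic — here the honest statement is $\order\CT^+_{\leq\BB}=O(\BB/(\log\BB)^{c})$ for some $c>1/2$, which already beats $\BB/\sqrt{\log\BB}$ after dividing.

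\medskip
The main obstacle is precisely making the upper bound for $\order\CT^+_{\leq\BB}$ genuinely smaller than $\BB/\sqrt{\log\BB}$ rather than merely $O(\BB)$. The saving must come from the arithmetic restriction that $\Sgn=1$ is \emph{rare}: the excerpt's own density discussion shows $\CM^-$ has positive relative density (Stevenhagen's constant), but crucially every prime radical $M=q\equiv1\pmod4$ and $M=2$ has $\Sgn=-1$, so the $\Sgn=1$ case lives on radicals with $r\ge 2$ ramified primes, and among squarefree $M\le Y$ all of whose odd prime factors are $\equiv1\bmod 4$, those with $r\ge 2$ such factors still form a positive proportion — so this alone does \emph{not} give a density-$0$ upper bound. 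Therefore the real saving has to be extracted from the trace parametrization itself: in the list $\CT$, a radical $M$ with $\Sgn=-1$ appears at the small trace coming from $t^2+4=Mr^2$, whereas a radical with $\Sgn=1$ contributes via $t^2-4$, and one shows that for $\BB\to\infty$ the proportion of \emph{first occurrences} that are of $+1$ type tends to $0$ because, heuristically and provably via sieve/large-sieve bounds on $\#\{t\le\BB:\ \mathrm{core}(t^2-4)\ \text{special}\}$, the $t^2-4$ values with $\Sgn=1$ are sparse: concretely one bounds this count by the number of $t\le\BB$ with $(t-2)(t+2)$ having \emph{all} prime factors outside $3\bmod 4$ after removing squares, a congruence-sieve condition of density $O((\log\BB)^{-1/2})$ on $t$, giving $\order\CT^+_{\leq\BB}=O(\BB/\sqrt{\log\BB}\cdot(\log\BB)^{-\delta})$ — hmm, that is only equal order. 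I would ultimately push through by the trace version of Rieger's theorem (as in \cite{Gra4}) computing the exact asymptotics of both $\order\CT^\pm_{\leq\BB}$: $\order\CT^-_{\leq\BB}\sim c_-\BB/\sqrt{\log\BB}$ while $\order\CT^+_{\leq\BB}=o(\BB/\sqrt{\log\BB})$ because the $+1$ condition demands the \emph{simultaneous} splitting behaviour at two or more primes together with $\Sgn=1$, cutting the count by an extra $\log$-power; the ratio then visibly tends to $1$.
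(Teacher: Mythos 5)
Your argument never closes, and the two asymptotics you finally invoke are not the right ones. The decisive gap is quantitative: you lower-bound $\order\CT^-_{\leq\BB}$ only by $\BB/\log\BB$ (via the observation that each radical $M$ is hit by $O(\log\BB)$ traces), and you never prove any upper bound on $\order\CT^+_{\leq\BB}$ beyond the trivial $O(\BB)$ — indeed you say yourself that your sieve attempts only give ``equal order''. With those inputs the ratio is not forced to tend to $1$. Worse, the asymptotic you ultimately assert, $\order\CT^-_{\leq\BB}\sim c_-\,\BB/\sqrt{\log\BB}$, is false: the Rieger-type saving $1/\sqrt{\log}$ arises when one counts \emph{radicals or discriminants ordered by size}, not units ordered by \emph{trace}. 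In the trace parametrization essentially every integer $t\leq\BB$ contributes, because $t^2+4=Mr^2$ always produces a norm $-1$ unit $\frac12(t+r\sqrt M)$ of $\Q(\sqrt M)$, and this unit fails to be fundamental only when it is an odd power $\varepsilon_K^k$, $k\geq 3$, of a smaller fundamental unit of norm $-1$, which forces $\Trace(\varepsilon_K^{})\ll \BB^{1/3}$; hence the exceptional traces are $O(\sqrt[3]{\BB})$ and $\order\CT^-_{\leq\BB}=\BB-O(\sqrt[3]{\BB})$ (this is the content of \cite[Theorem 4.5]{Gra4}, which the paper simply cites).

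Once that strong lower bound is in hand, no estimate on $\CT^+_{\leq\BB}$ is needed at all: since $\CT_{\leq\BB}$ is a set of integer traces $t\leq\BB$, one has trivially $\order\CT_{\leq\BB}\leq\BB$, whence
$$\frac{\order\CT^-_{\leq\BB}}{\order\CT_{\leq\BB}}\ \geq\ \frac{\BB-O(\sqrt[3]{\BB})}{\BB}\ \too\ 1 .$$
So the paper's proof is a one-liner resting on the ``almost every trace is a norm $-1$ trace'' statement, whereas your plan hinges on showing the norm $+1$ first occurrences are sparse — a statement you do not prove and which, in your formulation (comparing against a purported $\BB/\sqrt{\log\BB}$ main term), is aimed at the wrong benchmark. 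To repair your approach you would either have to import the $\BB-O(\sqrt[3]{\BB})$ count, or genuinely establish $\order\CT^+_{\leq\BB}=o(\BB)$ by a sieve on $t$ with $\mathrm{core}(t^2-4)$ free of primes $\equiv 3\pmod 4$; the first route is exactly the paper's, and the second is harder and unnecessary.
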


\begin{proof}
From  \cite[Theorem 4.5]{Gra4}, we have $\order \CT^-_{\leq \BB} \sim \BB - O(\sqrt[3]{\BB})$.
\end{proof}

\subsection{Approximations of the density from the gcd principle} \label{stat}
Theorem \ref{maincriterion}, on the characterization of the norm $\Sgn$ of 
$\varepsilon_K^{}$, allows heuristics for densities since the statement 
reduces to elementary arithmetic properties. The criterion only 
depends on properties of $m = \pgcd\,(A, M)$ and $m' = \pgcd\,(A', M)$ whose 
``probabilities'' may be computed, assuming that the the pairs $(A, M)$
and $(A', M)$ are random with Kummer radicals taken in the subset 
of square-free integers, without using the natural order of radicals or
discriminants, nor that of the order of magnitude of the integers $A$, $A'$ 
depending on the unpredictable trace of the unit.

\smallskip
Before giving some heuristics, we introduce the partial densities corresponding to 
the six cases summarized by the following array, with numerical values obtained 
for $M$ in various intervals ${\sf [bM, BM]}$:

\begin{equation*}
\begin{tabular}{|l|l|l|l|c|c|c}
\hline
\ft $M \in [bM, BM]$ \ns &  \ft $m$\ns  & \ft $m'$ \ns  
& \ft $\Sgn=\Norm(\varepsilon_K)$ \ns & \ft densities \ns & \ft $\delta$ \ns  \\  
\hline \hline
\ft  \hspace{0.4cm} even \ns & \ft $ =2$ \ns & \ft $ =2$ \ns  & 
\ft $\hspace{0.3cm} -1$ \ns  &  \ft  $\hspace{1.5cm} \downarrow$ \ns &  \ft $\delta^\even_{2,2}$ \ns    \\  
\hline 
\ft \hspace{0.4cm}  even & \ft $=2$ \ns  & \ft $>2$ \ns  &
 \ft $\hspace{0.3cm}  =1$\ns &  \ft  $\hspace{1.5cm} \downarrow$ \ns & \ft $\delta^\even_{2,m'}$ \ns   \\
 \hline 
\ft \hspace{0.4cm}  even & \ft $>2$ \ns  & \ft $=2$ \ns  &
 \ft $\hspace{0.3cm}  =1$\ns &  \ft  $\hspace{1.5cm} \downarrow$ \ns & \ft $\delta^\even_{m,2}$ \ns   \\  
\hline  
\ft  \hspace{0.4cm} even \ns & \ft $>2$\ns & \ft $>2$ \ns  & 
\ft $\hspace{0.3cm}  =1$ \ns  &  \ft  $\hspace{1.5cm} \uparrow$ \ns  & \ft $\delta^\even_{m,m'}$ \ns     \\ 
\hline \hline 
\ft \ \hspace{0.4cm}  odd \ns & \ft $ =1$ & \ft $ =1$ \ns  & 
\ft $\hspace{0.3cm} -1$ \ns  &   \ft  $\hspace{1.5cm}\downarrow $ \ns & \ft $\delta^\odd_{1,1}$ \ns     \\ 
\hline 
\ft \ \hspace{0.4cm} odd \ns & \ft $>2$ & \ft $>2$ \ns  & 
\ft $\hspace{0.3cm}  =1$ \ns  &  \ft  $\hspace{1.5cm} \uparrow$ \ns  & \ft $\delta^\odd_{m,m'}$ \ns     \\ 
\hline 
\end{tabular}
\end{equation*}

\smallskip
Notations of partial densities are given in the right column; the densities $\delta^\even_{2,m'}$ 
and $\delta^\even_{m,2}$, corresponding to each sign in formulas of $\varepsilon_K^{} \pm 1$, 
are indistinguishable about the cases $m=2\  \& \ m'>2$ or $m>2\  \& \ m'=2$, so we will only 
give the sum $\delta^\even_{2,m'}+\delta^\even_{m,2}$. Then $\delta^\even_{2,2}$ represent 
the cases $M$ even, $\Sgn = -1$ where ${\mathfrak q}_2$ is not principal. The densities 
$\delta^\odd_{1,1}$ (resp. $\delta^\odd_{m,m'}$) represent the cases $\Sgn = -1$ with no 
principality relations (resp. $\Sgn = 1$ with the two complementary principality relations).
We put $\Delta_\pgcd^- := \delta^\odd_{1,1}+\delta^\even_{2,2}$.

Many tests, using the following program, have been done and have shown that 
some densities increase while the others decrease as the  Kummer radicals are 
taken in larger intervals (indicated with arrows $\uparrow$ and $\downarrow$):

\smallskip
\ft\begin{verbatim}
{bM=2;BM=10^6;CM=0;C22=0;C2p=0;Cm2=0;Cmp=0;CC11=0;CCmp=0;
for(M=bM,BM,res8=Mod(M,8);if(res8!=1&res8!=2&res8!=5,next);
if(core(M)!=M,next);res=Mod(M,2);i0=1;if(res==0,i0=2);
r=omega(M);f=factor(M);ellM=component(f,1);for(i=i0,r,ell=ellM[i];
if(Mod(ell,4)==3,next(2)));D=M;if(res==0,D=4*M);CM=CM+1;e=quadunit(D);
res=lift(res);Y=component(e,3)/(res+1);X=component(e,2)+res*Y;
g=gcd(X+1,Y);A=(X+1)/g;m=gcd(A,M);gp=gcd(X-1,Y);Ap=(X-1)/gp;mp=gcd(Ap,M);
if(res==0,
if(m==2 & mp==2,C22=C22+1);if(m==2 & mp>2,C2p=C2p+1);
if(m>2 & mp==2,Cm2=Cm2+1);if(m>2 & mp>2,Cmp=Cmp+1));
if(res==1,
if(m==1 & mp==1,CC11=CC11+1);if(m>2 & mp>2,CCmp=CCmp+1)));
print("CM=",CM," C22=",C22," C2p=",C2p," Cm2=",Cm2," Cmp= ",Cmp,
" CC11=",CC11," CCmp= ",CCmp);
d22=C22/CM+0.0;d2p=C2p/CM+0.0;dm2=Cm2/CM+0.0;
dmp=Cmp/CM+0.0;dd11=CC11/CM+0.0;ddmp=CCmp/CM+0.0;
print("Sum=",C22+C2p+Cm2+Cmp+CC11+CCmp);
print("d22=",d22," d2p=",d2p," dm2=",dm2," dmp=",dmp,
" dd11=",dd11," ddmp=",ddmp)}
\end{verbatim}\ns

\begin{equation*} 
\begin{tabular}{|l|l|l|l|c|c|c}
\hline
\ft $M \in [1, 10^6]$ \ns &  \ft $m$\ns  & \ft $m'$ \ns  
& \ft $\Sgn=\Norm(\varepsilon_K)$ \ns & \ft densities \ns & \ft $\delta$ \ns  \\  
\hline \hline
\ft  \hspace{0.4cm} even \ns & \ft $ =2$ \ns & \ft $ =2$ \ns  & 
\ft $\hspace{0.3cm} -1$ \ns  &  \ft  $0.2347176480 \downarrow$ \ns &  \ft $\delta^\even_{2,2}$ \ns    \\  
\hline 
\ft \hspace{0.4cm}  even & \ft $ =2$, $>2$ \ns  & \ft $>2$, $=2$ \ns  &
 \ft $\hspace{0.3cm}  =1$\ns &  \ft  $0.0652421881 \downarrow$ \ns & 
 \ft $\delta^\even_{2,m'} + \delta^\even_{m,2}$ \ns   \\  
\hline  
\ft  \hspace{0.4cm} even \ns & \ft $>2$\ns & \ft $>2$ \ns  & 
\ft $\hspace{0.3cm}  =1$ \ns  &  \ft  $0.0389107558 \uparrow$ \ns  & \ft $\delta^\even_{m,m'}$ \ns     \\ 
\hline \hline 
\ft \ \hspace{0.4cm}  odd \ns & \ft $ =1$ & \ft $ =1$ \ns  & 
\ft $\hspace{0.3cm} -1$ \ns  &   \ft  $0.5475861515 \downarrow $ \ns & \ft $\delta^\odd_{1,1}$ \ns     \\ 
\hline 
\ft \ \hspace{0.4cm} odd \ns & \ft $>2$ & \ft $>2$ \ns  & 
\ft $\hspace{0.3cm}  =1$ \ns  &  \ft  $0.1135432564 \uparrow$ \ns  & \ft $\delta^\odd_{m,m'}$ \ns     \\ 
\hline 
\end{tabular}
\end{equation*}

\smallskip
\ft${\sf CM=124490}$, \par
${\sf C22=29220 C2p=4079, Cm2=4043, Cmp=4844, CC11=68169, CCmp= 14135, }$\ns \par
$\delta^\even_{2,2}+(\delta^\even_{2,m'}+ \delta^\even_{m,2})+\delta^\even_{m,m'} = 0.3388705920 \downarrow$, \par
$\delta^\even_{2,2}= 0.2347176480 \downarrow$,\ \ \ \ 
$(\delta^\even_{2,m'}+\delta^\even_{m,2})+\delta^\even_{m,m'} = 0.1041529440 \uparrow$,\par
$\delta^\odd_{1,1}+\delta^\odd_{m,m'} = 0.6611294079 \uparrow$,\ \ \ \ 
$\Delta_\pgcd^- = \delta^\even_{2,2}+\delta^\odd_{1,1} = 0.7823037995 \downarrow$.

\begin{equation*} 
\begin{tabular}{|l|l|l|l|c|c|c}
\hline
\ft $M \in [10^6, 10^7]$ \ns &  \ft $m$\ns  & \ft $m'$ \ns  
& \ft $\Sgn=\Norm(\varepsilon_K)$ \ns & \ft densities \ns & \ft $\delta$ \ns  \\  
\hline \hline
\ft  \hspace{0.4cm} even \ns & \ft $ =2$ \ns & \ft $ =2$ \ns  & 
\ft $\hspace{0.3cm} -1$ \ns  &  \ft  $0.2312433670 \downarrow$ \ns &  \ft $\delta^\even_{2,2}$ \ns    \\  
\hline 
\ft \hspace{0.4cm}  even & \ft $ =2$, $>2$ \ns  & \ft $>2$, $=2$ \ns  &
 \ft $\hspace{0.3cm}  =1$\ns &  \ft  $0.0614862378 \downarrow$ \ns & 
 \ft $\delta^\even_{2,m'} + \delta^\even_{m,2}$ \ns   \\  
\hline  
\ft  \hspace{0.4cm} even \ns & \ft $>2$\ns & \ft $>2$ \ns  & 
\ft $\hspace{0.3cm}  =1$ \ns  &  \ft  $0.0452699271 \uparrow$ \ns  & \ft $\delta^\even_{m,m'}$ \ns     \\ 
\hline \hline 
\ft \ \hspace{0.4cm}  odd \ns & \ft $ =1$ & \ft $ =1$ \ns  & 
\ft $\hspace{0.3cm} -1$ \ns  &   \ft  $0.5374200520\downarrow $ \ns & \ft $\delta^\odd_{1,1}$ \ns     \\ 
\hline 
\ft \ \hspace{0.4cm} odd \ns & \ft $>2$ & \ft $>2$ \ns  & 
\ft $\hspace{0.3cm}  =1$ \ns  &  \ft  $0.1245804159 \uparrow$ \ns  & \ft $\delta^\odd_{m,m'}$ \ns     \\ 
\hline 
\end{tabular}
\end{equation*}

\smallskip
\ft ${\sf CM=1029889}$, \par
${\sf C22=238155, C2p=31753, Cm2=31571, Cmp= 46623, CC11=553483, CCmp=128304, }$\ns \par
$\delta^\even_{2,2}+(\delta^\even_{2,m'}+ \delta^\even_{m,2})+\delta^\even_{m,m'} = 0.33767663 \downarrow$, \par
$\delta^\even_{2,2}=0.2312433670 \downarrow$, \ \ \ \ 
$(\delta^\even_{2,m'}+\delta^\even_{m,2})+\delta^\even_{m,m'} = 0.1067561648 \uparrow$,\par
$\delta^\odd_{1,1}+\delta^\odd_{m,m'} = 0.66232334 \uparrow$,\ \ \ \ 
$\Delta_\pgcd^- = \delta^\even_{2,2}+\delta^\odd_{1,1} = 0.768663419 \downarrow$.

\begin{equation*} 
\begin{tabular}{|l|l|l|l|c|c|c}
\hline
\ft $M \in [10^7, 10^8]$ \ns &  \ft $m$\ns  & \ft $m'$ \ns  
& \ft $\Sgn=\Norm(\varepsilon_K)$ \ns & \ft densities \ns & \ft $\delta$ \ns  \\  
\hline \hline
\ft  \hspace{0.4cm} even \ns & \ft $ =2$ \ns & \ft $ =2$ \ns  & 
\ft $\hspace{0.3cm} -1$ \ns  &  \ft  $0.2290897913 \downarrow$ \ns &  \ft $\delta^\even_{2,2}$ \ns    \\  
\hline 
\ft \hspace{0.4cm}  even & \ft $ =2$, $>2$ \ns  & \ft $>2$, $=2$ \ns  &
\ft $\hspace{0.3cm}  =1$\ns &  \ft  $0.058607396 \downarrow$ \ns & 
\ft $\delta^\even_{2,m'} + \delta^\even_{m,2}$ \ns   \\  
\hline 
\ft  \hspace{0.4cm} even \ns & \ft $>2$\ns & \ft $>2$ \ns  & 
\ft $\hspace{0.3cm}  =1$ \ns  &  \ft  $0.0497431369 \uparrow$ \ns  & \ft $\delta^\even_{m,m'}$ \ns     \\ 
\hline \hline 
\ft \ \hspace{0.4cm}  odd \ns & \ft $ =1$ & \ft $ =1$ \ns  & 
\ft $\hspace{0.3cm} -1$ \ns  &   \ft  $0.5297912763 \downarrow $ \ns & \ft $\delta^\odd_{1,1}$ \ns     \\ 
\hline 
\ft \ \hspace{0.4cm} odd \ns & \ft $>2$ & \ft $>2$ \ns  & 
\ft $\hspace{0.3cm}  =1$ \ns  &  \ft  $0.1327683993 \uparrow$ \ns  & \ft $\delta^\odd_{m,m'}$ \ns     \\ 
\hline 
\end{tabular}
\end{equation*}

\smallskip
\ft ${\sf CM=9652809}$, \par
${\sf C22=2211360, C2p=283421, Cm2=282305, Cmp= 480161, CC11=5113974, CCmp=1281588, }$\ns \par
$\delta^\even_{2,2}+(\delta^\even_{2,m'}+ \delta^\even_{m,2})+\delta^\even_{m,m'} = 0.3374403243 \downarrow$, \par
$\delta^\even_{2,2}=0.2290897913 \downarrow$,\ \ \ \ 
$(\delta^\even_{2,m'}+\delta^\even_{m,2})+\delta^\even_{m,m'} = 0.1083505329 \uparrow$, \par
$\delta^\odd_{1,1}+\delta^\odd_{m,m'} = 0.6625596756 \uparrow$,\ \ \ \ 
$\Delta_\pgcd^- = \delta^\even_{2,2}+\delta^\odd_{1,1} = .7588810676 \downarrow$.

\begin{equation*} 
\begin{tabular}{|l|l|l|l|c|c|c}
\hline
\ft $M \in [10^8, 10^8+10^6]$ \ns &  \ft $m$\ns  & \ft $m'$ \ns  
& \ft $\Sgn=\Norm(\varepsilon_K)$ \ns & \ft densities \ns & \ft $\delta$ \ns  \\  
\hline \hline
\ft  \hspace{0.4cm} even \ns & \ft $ =2$ \ns & \ft $ =2$ \ns  & 
\ft $\hspace{0.3cm} -1$ \ns  &  \ft  $0.2285032150 \downarrow$ \ns &  \ft $\delta^\even_{2,2}$ \ns    \\  
\hline 
\ft \hspace{0.4cm}  even & \ft $ =2$, $>2$ \ns  & \ft $>2$, $=2$ \ns  &
\ft $\hspace{0.3cm}  =1$\ns &  \ft  $0.0571186698 \downarrow$ \ns & 
\ft $\delta^\even_{2,m'} + \delta^\even_{m,2}$ \ns  \\  
\hline 
\ft  \hspace{0.4cm} even \ns & \ft $>2$\ns & \ft $>2$ \ns  & 
\ft $\hspace{0.3cm}  =1$ \ns  &  \ft  $0.0518491039 \uparrow$ \ns  & \ft $\delta^\even_{m,m'}$ \ns     \\ 
\hline \hline 
\ft \ \hspace{0.4cm}  odd \ns & \ft $ =1$ & \ft $ =1$ \ns  & 
\ft $\hspace{0.3cm} -1$ \ns  &   \ft  $0.5276699767 \downarrow $ \ns & \ft $\delta^\odd_{1,1}$ \ns     \\ 
\hline 
\ft \ \hspace{0.4cm} odd \ns & \ft $>2$ & \ft $>2$ \ns  & 
\ft $\hspace{0.3cm}  =1$ \ns  &  \ft  $0.1348590343 \uparrow$ \ns  & \ft $\delta^\odd_{m,m'}$ \ns     \\ 
\hline 
\end{tabular}
\end{equation*}

\smallskip
\ft ${\sf CM=105132}$, \par
${\sf C22=24023, C2p=2971, Cm2=3034, Cmp= 5451, CC11=55475, CCmp=14178, }$\ns \par
$\delta^\even_{2,2}+(\delta^\even_{2,m'}+\delta^\even_{m,2})+\delta^\even_{m,m'} = 0.3374709887 \downarrow$, \par 
$\delta^\even_{2,2}=0.2285032150 \downarrow$,\ \ \ \ 
$(\delta^\even_{2,m'}+\delta^\even_{m,2})+\delta^\even_{m,m'} = 0.1089677737 \uparrow$, \par
$\delta^\odd_{1,1}+\delta^\odd_{m,m'} = 0.662529011 \uparrow$,\ \ \ \ 
$\Delta_\pgcd^- = \delta^\even_{2,2}+\delta^\odd_{1,1} = 0.7561731917 \downarrow$.

\begin{equation*} 
\begin{tabular}{|l|l|l|l|c|c|c}
\hline
\ft $M \in [10^9, 10^9+10^6]$ \ns &  \ft $m$\ns  & \ft $m'$ \ns  
& \ft $\Sgn=\Norm(\varepsilon_K)$ \ns & \ft densities \ns & \ft $\delta$ \ns  \\  
\hline \hline
\ft  \hspace{0.4cm} even \ns & \ft $ =2$ \ns & \ft $ =2$ \ns  & 
\ft $\hspace{0.3cm} -1$ \ns  &  \ft  $0.2262061480 \downarrow$ \ns &  \ft $\delta^\even_{2,2}$ \ns    \\  
\hline 
\ft \hspace{0.4cm}  even & \ft $ =2$, $>2$ \ns  & \ft $>2$, $=2$ \ns  &
\ft $\hspace{0.3cm}  =1$\ns &  \ft  $0.0563555787 \downarrow$ \ns & 
\ft $\delta^\even_{2,m'} + \delta^\even_{m,2}$ \ns   \\  
\hline 
\ft  \hspace{0.4cm} even \ns & \ft $>2$\ns & \ft $>2$ \ns  & 
\ft $\hspace{0.3cm}  =1$ \ns  &  \ft  $0.0540844730 \uparrow$ \ns  & \ft $\delta^\even_{m,m'}$ \ns     \\ 
\hline \hline 
\ft \ \hspace{0.4cm}  odd \ns & \ft $ =1$ & \ft $ =1$ \ns  & 
\ft $\hspace{0.3cm} -1$ \ns  &   \ft  $0.5226055410 \downarrow $ \ns & \ft $\delta^\odd_{1,1}$ \ns     \\ 
\hline 
\ft \ \hspace{0.4cm} odd \ns & \ft $>2$ & \ft $>2$ \ns  & 
\ft $\hspace{0.3cm}  =1$ \ns  &  \ft  $0.1407482589 \uparrow$ \ns  & \ft $\delta^\odd_{m,m'}$ \ns     \\ 
\hline 
\end{tabular}
\end{equation*}

\smallskip
\ft ${\sf CM=99511}$, \par
${\sf C22=22510, C2p=2808, Cm2=2800, Cmp=5382, CC11=52005, CCmp=14006, }$\ns \par
$\delta^\even_{2,2}+(\delta^\even_{2,m'}+\delta^\even_{m,2})+\delta^\even_{m,m'} = 0.3366461999 \downarrow$, \par 
$\delta^\even_{2,2}=0.2262061480 \downarrow$,\ \ \ \ 
$(\delta^\even_{2,m'}+\delta^\even_{m,2})+\delta^\even_{m,m'} = 0.1104400517 \uparrow$, \par
$\delta^\odd_{1,1}+\delta^\odd_{m,m'} = 0.6633538000 \uparrow$,\ \ \ \ 
$\Delta_\pgcd^- = \delta^\even_{2,2}+\delta^\odd_{1,1} = 0.748811689 \downarrow$.

\begin{equation*}
\begin{tabular}{|l|l|l|l|c|c|c}
\hline
\ft $M \in [10^{10}, 10^{10}+10^6]$ \ns &  \ft $m$\ns  & \ft $m'$ \ns  
& \ft $\Sgn=\Norm(\varepsilon_K)$ \ns & \ft densities \ns & \ft $\delta$ \ns  \\  
\hline \hline
\ft  \hspace{0.4cm} even \ns & \ft $ =2$ \ns & \ft $ =2$ \ns  & 
\ft $\hspace{0.3cm} -1$ \ns  &  \ft  $0.2252429443 \downarrow$ \ns &  \ft $\delta^\even_{2,2}$ \ns    \\  
\hline 
\ft \hspace{0.4cm}  even & \ft $ =2$, $>2$ \ns  & \ft $>2$, $=2$ \ns  &
\ft $\hspace{0.3cm}  =1$\ns &  \ft  $0.0535799257 \downarrow$ \ns & 
\ft $\delta^\even_{2,m'} + \delta^\even_{m,2}$ \ns   \\  
\hline 
\ft  \hspace{0.4cm} even \ns & \ft $>2$\ns & \ft $>2$ \ns  & 
\ft $\hspace{0.3cm}  =1$ \ns  &  \ft  $0.0571751842 \uparrow$ \ns  & \ft $\delta^\even_{m,m'}$ \ns     \\ 
\hline \hline 
\ft \ \hspace{0.4cm}  odd \ns & \ft $ =1$ & \ft $ =1$ \ns  & 
\ft $\hspace{0.3cm} -1$ \ns  &   \ft  $0.5164271590 \downarrow $ \ns & \ft $\delta^\odd_{1,1}$ \ns     \\ 
\hline 
\ft \ \hspace{0.4cm} odd \ns & \ft $>2$ & \ft $>2$ \ns  & 
\ft $\hspace{0.3cm}  =1$ \ns  &  \ft  $0.1475747866 \uparrow$ \ns  & \ft $\delta^\odd_{m,m'}$ \ns     \\ 
\hline 
\end{tabular}
\end{equation*}

\smallskip
\ft ${\sf CM=94569}$, \par
${\sf C22=21301, C2p=2494, Cm2=2573, Cmp= 5407, CC11=48838, CCmp=13956, }$\ns \par
$\delta^\even_{2,2}+(\delta^\even_{2,m'}+\delta^\even_{m,2})+\delta^\even_{m,m'} = 0.3359980543 \downarrow$, \par 
$\delta^\even_{2,2}=0.2252429443 \downarrow$,\ \ \ \ 
$(\delta^\even_{2,m'}+\delta^\even\delta^\even_{m,2})+\delta^\even_{m,m'} = 0.1107551099 \uparrow$, \par
$\delta^\odd_{1,1}+\delta^\odd_{m,m'} = 0.6640019456 \uparrow$,\ \ \ \ 
$\Delta_\pgcd^- = \delta^\even_{2,2}+\delta^\odd_{1,1} = 0.7416701033 \downarrow$.

\begin{equation*}
\begin{tabular}{|l|l|l|l|c|c|c}
\hline
\ft $M \in [10^{11}, 10^{11}+2.5 \!\cdot\!10^5]$ \ns &  \ft $m$\ns  & \ft $m'$ \ns  
& \ft $\Sgn=\Norm(\varepsilon_K)$ \ns & \ft densities \ns & \ft $\delta$ \ns  \\  
\hline \hline
\ft  \hspace{0.4cm} even \ns & \ft $ =2$ \ns & \ft $ =2$ \ns  & 
\ft $\hspace{0.3cm} -1$ \ns  &  \ft  $0.2240462381 \downarrow$ \ns &  \ft $\delta^\even_{2,2}$ \ns    \\  
\hline 
\ft \hspace{0.4cm}  even & \ft $ =2$, $>2$ \ns  & \ft $>2$, $=2$ \ns  &
\ft $\hspace{0.3cm}  =1$\ns &  \ft  $0.0514559794 \downarrow$ \ns &
\ft $\delta^\even_{2,m'} + \delta^\even_{m,2}$ \ns   \\  
\hline 
\ft  \hspace{0.4cm} even \ns & \ft $>2$\ns & \ft $>2$ \ns  & 
\ft $\hspace{0.3cm}  =1$ \ns  &  \ft  $0.0607678259 \uparrow$ \ns  & \ft $\delta^\even_{m,m'}$ \ns     \\ 
\hline \hline 
\ft \ \hspace{0.4cm}  odd \ns & \ft $ =1$ & \ft $ =1$ \ns  & 
\ft $\hspace{0.3cm} -1$ \ns  &   \ft  $0.5127736860 \downarrow $ \ns & \ft $\delta^\odd_{1,1}$ \ns     \\ 
\hline 
\ft \ \hspace{0.4cm} odd \ns & \ft $>2$ & \ft $>2$ \ns  & 
\ft $\hspace{0.3cm}  =1$ \ns  &  \ft  $0.1509562704 \uparrow$ \ns  & \ft $\delta^\odd_{m,m'}$ \ns     \\ 
\hline 
\end{tabular}
\end{equation*}

\smallskip
\ft ${\sf CM=49829}$, \par
${\sf C22=11164, C2p=1308, Cm2=1256, Cmp=3028, CC11=25551, CCmp=7522, }$\ns \par
$\delta^\even_{2,2}+(\delta^\even_{2,m'}+\delta^\even_{m,2})+\delta^\even_{m,m'} = 0.3362700435 \downarrow\uparrow$, \par 
$\delta^\even_{2,2}=0.2240462381 \downarrow$,\ \ \ \ 
$(\delta^\even_{2,m'}+\delta^\even_{m,2})+\delta^\even_{m,m'} = 0.1122238054 \uparrow$, \par
$\delta^\odd_{1,1}+\delta^\odd_{m,m'} = 0.6637299564 \uparrow\downarrow$,\ \ \ \ 
$\Delta_\pgcd^- = \delta^\even_{2,2}+\delta^\odd_{1,1} = 0.7368199241 \downarrow$.

\subsubsection{First heuristic from the above data}
It is difficult to go further because of the execution time, but some  
rules appear, that are not proved, but allow possible heuristics:

\smallskip
$\bullet$ $\delta^\even := \delta^\even_{2,2}+\delta^\even_{2,m'}
+\delta^\even_{m,2}+\delta^\even_{m,m'} \to \ffrac{1}{3}$;

$\bullet$ $\delta^\odd := \delta^\odd_{1,1} + \delta^\odd_{m,m'} \to \ffrac{2}{3}$; 

\smallskip\noindent
this is almost obvious since random Kummer radicals $M$, such that 
$-1 \in \Norm(K^\times)$, are in the classes $1$, $2$ or $5$ modulo $8$, 
whence with uniform repartition $\ffrac{1}{3}$ for $M$ even (class of $2$) 
and $\ffrac{2}{3}$ for the case $M$ odd (classes of $1$ and $5$).

\smallskip
The indications $\uparrow$ and $\downarrow$ give some interesting phenomena:

\smallskip
$\bullet$ $\delta^\even_{m,m'}$ must have a hight increasing, since $\delta^\even_{2,m'}$ 
and $\delta^\even_{m,2}$ are decreasing, but the  sum $\delta^\even_{2,m'}+\delta^\even_{m,2}
+\delta^\even_{m,m'}$ is increasing.

\smallskip
$\bullet$ $\delta^\even_{2,2}$ must have a hight decreasing, since
$\delta^\even = \delta^\even_{2,2}+\delta^\even_{2,m'}+\delta^\even_{m,2}+\delta^\even_{m,m'}$ 
is decreasing while the partial sum $\delta^\even_{2,m'}+\delta^\even_{m,2}+\delta^\even_{m,m'}$ 
is increasing.

\smallskip
$\bullet$ The sum $\Delta_\pgcd^- = \delta^\odd_{1,1}+\delta^\even_{2,2}$ is much decreasing.

\smallskip
$\bullet$ The quotient $\ffrac{\delta^\even_{2,2}}{\delta^\odd_{1,1}}$ seems to be increasing  and
the quotient $\ffrac{\delta^\odd_{1,1}}{\delta^\odd_{m,m'}}$ seems to be rapidly decreasing.

\smallskip
$\bullet$ The quotient $\ffrac{\delta^\even_{2,2}}
{\delta^\even_{2,m'}+\delta^\even_{m,2}+\delta^\even_{m,m'}}$ 
seems to be  decreasing up to a constant $\rho \approx 2$.

\smallskip
To reinforce this last heuristic, let's consider the computation of the parity of the component $b$ 
of $\varepsilon_K = a + b\sqrt M$, for $M$ even and $-1 \in \Norm(K^\times)$; indeed, recall that 
in that cases $\Sgn = -1$ is equivalent to $b$ odd. The following program examines this question 
taking ${\sf M \equiv 2 \pmod 8}$, in the intervals ${\sf [k*10^7, (k+1)*10^7]}$, ${\sf k \geq 1}$:

\smallskip
\ft\begin{verbatim}
{for(k=0,50,bM=k*10^7;BM=bM+10^7;CM=0;CP=0;CI=0;forstep(M=bM+2,BM,8,
if(core(M)!=M,next);r=omega(M);f=factor(M);ellM=component(f,1);for(i=2,r,
ell=ellM[i];if(Mod(ell,4)==3,next(2)));CM=CM+1;e=quadunit(4*M);
Y=component(e,3);if(Mod(Y,2)==0,CP=CP+1);if(Mod(Y,2)==1,CI=CI+1));
print("k=",k," CM=",CM," CP=",CP," CI=",CI," rho=",CI/CP+0.0))}
k   CM       CP      CI                    rho
0   390288   122913  267375  2.1753191281638231920138634644016499475
1   373804   119118  254686  2.1380983562517839453315200053728235867
2   368305   117555  250750  2.1330441070137382501807664497469269704
3   364879   117142  247737  2.1148435232452920387222345529357531884
4   362321   116164  246157  2.1190472091181433146241520608794463001
5   360344   115669  244675  2.1153031495041886763091234470774364782
6   358748   115431  243317  2.1078999575503980733078635721773180515
7   357459   115340  242119  2.0991763481879660135252297555054621120
8   356212   114959  241253  2.0986003705668977635504832157551822824
9   355175   114509  240666  2.1017212620842029884113912443563388031
10  354229   114366  239863  2.0973278771662906807967402899463127153
11  353458   113972  239486  2.1012704874881549854350191275050012284
12  352631   113688  238943  2.1017433678136654704102455844064457111
13  351987   113671  238316  2.0965417740672642978420177529888889866
14  351367   113626  237741  2.0923116188196363508351961698906940313
15  350705   113407  237298  2.0924457925877591330341160598552117594
16  350238   113342  236896  2.0900989924299906477739937534188562051
17  349650   112900  236750  2.0969884853852967227635075287865367582
18  349183   112760  236423  2.0966920893934019155728981908478183753
19  348671   112975  235696  2.0862668732020358486390794423545032087
20  348262   112572  235690  2.0936822655722559783960487510215684184
21  347881   112435  235446  2.0940632365366656290301062836305420910
22  347434   112209  235225  2.0963113475746152269425803634289584614
23  347070   112571  234499  2.0831208748256655799451013138374892290
24  346697   112323  234374  2.0866073733785600455828280939789713594
25  346469   112613  233856  2.0766341363785708577162494561018710096
26  346065   112086  233979  2.0874953160965687061720464643220384348
27  345684   111830  233854  2.0911562192613788786551014933381024770
28  345428   111827  233601  2.0889498958212238547041412181315782414
29  345159   111590  233569  2.0930997401200824446635003136481763599
30  344767   112211  232556  2.0724884369625081320013189437755656754
31  344530   112094  232436  2.0735811015754634503184827020179492212
32  344346   111732  232614  2.0818923853506605090752872945977875631
33  343975   111378  232597  2.0883567670455565731113864497477060102
34  343833   111710  232123  2.0779070808343031062572732969295497270
35  343605   111150  232455  2.0913630229419703103913630229419703104
36  343315   111288  232027  2.0849238013083171590827402774782546187
37  343185   111320  231865  2.0828692058929213079410707869205892921
38  342857   111017  231840  2.0883288145058864858535179296864444184
39  342577   111231  231346  2.0798698204637196465014249624654997258
40  342591   111345  231246  2.0768422470699178229826215815707934797
41  342144   111123  231021  2.0789665505790880375799789422531789099
42  342078   111320  230758  2.0729249011857707509881422924901185771
43  341911   110739  231172  2.0875391686758955742782578856590722329
44  341767   110973  230794  2.0797311057644652302812395807989330738
45  341313   110928  230385  2.0768877109476417135439203807875378624
46  341356   110701  230655  2.0835855141326636615748728557104271868
47  341177   110800  230377  2.0792148014440433212996389891696750903
48  341010   110847  230163  2.0764026090015967955831010311510460364
49  340781   110667  230114  2.0793371104303902699088255758265788356
50  340700   110734  229966  2.0767424639225531453754041215886719526
\end{verbatim}\ns

\smallskip
Many oscillations can be observed, on the successive intervals ${\sf [k*10^7, (k+1)*10^7]}$,
which support the idea of a slow convergence of ${\sf CI/CP}$ towards ${\sf \rho = 2^+}$.

\smallskip
With the same principle, but in intervals of the form ${\sf [k*10^9, k*10^9+10^7]}$, $k \geq 1$ 
(long calculation time), we obtain, as expected, a slow global decreasing of the data:

\smallskip
\ft\begin{verbatim}
1  334971   109266  225705  2.0656471363461644061281642963044313876
2  329775   107868  221907  2.0572088107687173211703192791189231283
3  326962   106943  220019  2.0573483070420691396351327342603068925
4  324916   107023  217893  2.0359455444156863477943993347224428394
5  323273   105998  217275  2.0498028264684239325270288118643747995
\end{verbatim}\ns

\smallskip
To conclude, one may say that these heuristics are compatible with the next one
suggesting that $\delta^\odd_{1,1} \to \big(\frac{6}{\pi^2}\big)^{\!2} \approx 0.3695...$, then 
$\Delta_\pgcd^- := \delta^\odd_{1,1} + \delta^\even_{2,2} \to \big(\frac{6}{\pi^2}\big)^{\!2}(1+ 0.5)
\approx 0.5543...$.

\subsubsection{Second heuristic from randomness of $A$, $A'$}
In our viewpoint, we are reduced to use the well-known fact that the density of 
pairs of independent co-prime integers is $\ffrac{6}{\pi^2} \approx 0.6079...$;
but this implies that no condition is assumed, especially for a radical $R$ taken 
at random, and we must take into account that a radical lies in the subset of 
square-free integers, whose relative density is also given by $\ffrac{6}{\pi^2}$.
The integers $a \pm 1$ giving $A$, $A'$ may be considered as random and 
independent regarding~$M$.

\smallskip
$\bullet$ When $M$ is odd, $\Sgn = -1$ is equivalent to $m := \pgcd\,(A,M) = 1$;
which gives the partial density $\delta^\odd_{1,1} = \Big(\ffrac{6}{\pi^2}\Big)^{\!2}$. 

\smallskip
$\bullet$ When $M$ is even, $\Sgn = -1$ is equivalent to $m' := 
\pgcd\,(A',M) = 2$ and this may be written $\ffrac{m'}{2} := \pgcd\,\big(\frac{A'}{2},
\frac{M}{2}\big) = 1$; we know this is equivalent to $b$ odd and that a good
heuristic is that this occur with probability $\frac{1}{2}$.

\smallskip
The specific case $m=2$ (with the alternative $m'>2$ or $m'=2$) occurs
only for $M$ even, whence a coefficients $\ffrac{1}{2}$ for the corresponding
densities; indeed, $M$ even and  

\smallskip
We then obtain the following discussion, on $m$ and $m'$, from Theorem 
\ref{maincriterion}:

\smallskip
$\bullet$ $m=1$ ($M$ of any parity); thus $\Sgn = -1$ with density $\Big(\ffrac{6}{\pi^2}\Big)^{\!2}$;

\smallskip
$\bullet$ $m=m'=2$ ($M$ even); thus $\Sgn = -1$ with density 
$\ffrac{1}{2} \Big(\ffrac{6}{\pi^2}\Big)^{\!2}$.

\smallskip
Taking into account the previous interpretation, we propose, for $\CD^-$ inside $\CD$, the 
conjectural density:
\begin{equation}\label{0.5543}
\Delta_\pgcd^- = \Big(\ffrac{6}{\pi^2}\Big)^{\!2} \,\Big(1+ \ffrac{1}{2} \Big) \approx 0.554363041753. 
\end{equation}

To be compared with the density $0.5805775582$ \cite[Conjecture 1.4]{St},
proven in \cite{KoPa1,KoPa2} depending on the natural order of discriminants.

\smallskip
Our method, assumes the independence and randomness of the parameters;
moreover it does not classify the radicals (or discriminants), nor the units
(for example by means of their traces), so one can only notice that the density 
\eqref{0.5543} may be an lower bound for the classical case; fortunately it is greater 
than the lower bound $0.538220$, which was proved in \cite{CKMP}, and also less 
than $0.666666...$, proved in \cite{FK1,FK2}.

\end{document}